\newtheorem{thm}{Theorem}
\newtheorem*{thm*}{Theorem}
\newtheorem{lemma}[thm]{Lemma}
\newtheorem{corollary}[thm]{Corollary}
\newtheorem{conjecture}[thm]{Conjecture}
\newtheorem{prop}[thm]{Proposition}
\newtheorem*{prop*}{Proposition}
\theoremstyle{definition}
\newtheorem{defn}{Definition}
\numberwithin{thm}{section}
\newcommand{\Z}{\mathbb{Z}}
\newcommand{\R}{\mathbb{R}}
\newcommand{\Q}{\mathbb{Q}}
\title{The Spectrum of $\Q$-isotropic binary quadratic forms}
\author[Giorgos Kotsovolis]{Giorgos Kotsovolis}
\address{Department of Mathematics, Princeton University, Princeton, NJ 08540}
\email{gk13@princeton.edu}
\date{\today}
\begin{document}

\maketitle
\begin{abstract}
    We give a complete list of the points in the spectrum $$\mathcal{Z}=\{\inf_{(x,y)\in\Lambda,xy\neq0}{\left\vert xy\right\vert},\,\text{$\Lambda$ is a unimodular rational lattice of $\R^2$}\}$$ above $\frac{1}{3}.$ We further show that the set of limit points of $\mathcal{Z}$ with values larger than $\frac{1}{3},$ is equal to the set $\{\frac{2m}{\sqrt{9m^2-4}+3m},\text{ where $m$ is a Markoff number}\}$. 
\end{abstract}
\tableofcontents
\section{Introduction}
For a lattice $\Lambda\subset\R^2$ of covolume 1, we define $$\lambda_1(\Lambda)=\inf_{(x,y)\in\Lambda\backslash\underline{0}}{\left\vert xy\right\vert}.$$ The set of values that $\lambda_1$ assumes as $\Lambda$ varies through the unimodular lattices of $\R^2$ is the well studied Markoff spectrum $\mathcal{M}$ and could be interpreted (after some renormalization) as the set of first eigenvalues, when the operator $D=\pdv{}{x}\pdv{}{y}$ acts on the orthogonal complement of the constant functions inside $L(\R^2/\Lambda)$ (See \cite{Sarnak} for details). The points inside $\mathcal{M}\bigcap (\frac{1}{3},\infty)$, were classified by Markoff (\cite{M1},\cite{M3}), who parametrized them by the Markoff numbers; natural numbers that are solutions of the cubic equation $$x^2+y^2+z^2=3xyz.$$ When all the vectors of the lattice $\Lambda$ lie in $\sqrt{d_{\Lambda}}\cdot\Q^2$, for some positive rational constant $d_{\Lambda}$, we trivially have that $\lambda_1(\Lambda)=0.$ We shall refer to these lattices as rational. In this paper, we study the distribution of the first non-zero eigenvalue of such tori; we define $$\lambda^*_1(\Lambda)=\inf_{(x,y)\in\Lambda,\,xy\neq0}{\left\vert xy\right\vert}$$ and we wish to understand the set $$\mathcal{Z}=\{\lambda_1^*(\Lambda),\,\text{$\Lambda$ is a unimodular rational lattice of $\R^2$}\}.$$ This spectral problem first appears, in a different form as we discuss in Section \ref{Equivalent definitions}, in the work of Zaremba (\cite{Z1},\cite{Z2}), who was interested in numerical approximation of double integrals and, following Hlawka \cite{hlawka}, understood that determining the higher values of these numbers could have applications in bounding the error term of such numerical approximations. Zaremba essentially showed that  $$\mathcal{Z}=\left\{\min_{q\in\mathbb{N},0<q<b}q\left\vert\left\vert \frac{qa}{b}\right\vert\right\vert,\, \frac{a}{b}\in\Q\right\}$$ and hence we denote by $\mathcal{Z}$, both the spectrum and the function $$\mathcal{Z}:\frac{a}{b}\rightarrow\min_{q\in\mathbb{N},0<q<b}q\vert\vert \frac{qa}{b}\vert\vert,$$ for some rational $\frac{a}{b}\in\Q$ with $(a,b)=1.$ There is a clear connection between the $\mathcal{Z}$ spectrum and the notorious Zaremba conjecture, which can be interpreted as follows, in terms of the function $\mathcal{Z}$: 
\begin{conjecture}[Zaremba]
    There exists some absolute positive constant $C$, such that for every $b\in\mathbb{N},$ there exists $a\in\mathbb{N}$ such that $\mathcal{Z}(\frac{a}{b})>C.$
\end{conjecture}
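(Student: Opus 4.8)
The final displayed statement is Zaremba's conjecture, which is a well-known open problem; I cannot in honesty present a complete proof, so what follows is the line of attack the subject has converged on together with the precise point at which it breaks down.

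The first move is to restate the condition $\mathcal{Z}(a/b)>C$ in terms of continued fractions. If $(a,b)=1$ and $a/b=[0;a_1,\dots,a_k]$, then a direct computation from $\mathcal{Z}(a/b)=\min_{0<q<b}q\|qa/b\|$ shows that $\mathcal{Z}(a/b)$ is bounded below by a positive constant depending only on $A:=\max_i a_i$, and conversely that a single large partial quotient forces $\mathcal{Z}(a/b)$ to be small. Hence Zaremba's conjecture is equivalent to the assertion that there is an absolute $A$ such that every $b\in\mathbb{N}$ has a coprime numerator $a$ all of whose partial quotients are $\le A$ (the folklore value being $A=5$, with $A=4$ already failing at $b=6$). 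Writing $\mathfrak{D}_A$ for this set of ``good'' denominators, the task is to prove $\mathfrak{D}_A=\mathbb{N}$.

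The approach, going back to Bourgain and Kontorovich and refined by Huang, Kan and others, encodes $\mathfrak{D}_A$ via the semigroup $G_A\subset\SL_2(\Z)$ generated by the matrices $\begin{pmatrix}0 & 1\\ 1 & j\end{pmatrix}$, $1\le j\le A$: the denominators of $[0;a_1,\dots,a_k]$ with all $a_i\le A$ are exactly the entries arising from products of these generators. For $A\ge 2$ the semigroup $G_A$ is thin, its limit set having Hausdorff dimension $\delta=\delta(A)$ with $1/2<\delta<1$ — already $\delta(2)>1/2$ by the Jenkinson--Pollicott computation, and $\delta(A)\to 1$ — so one runs a circle method: the expected count of good denominators up to $N$ is of order $N^{2\delta-1}$, and one tries to show that the major-arc (main) term dominates the minor-arc error, the latter controlled by the Bourgain--Gamburd--Sarnak spectral gap for the congruence quotients of $G_A$. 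This yields that $\mathfrak{D}_A$ has density one in $\mathbb{N}$ for all sufficiently large $A$, and, after later refinements, for smaller alphabets including $A=5$; it also yields a full solution for various structured families of moduli, such as Niederreiter's powers of $2$ and $3$.

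The main obstacle — and the reason the conjecture is still open — is the passage from \emph{density one} to \emph{every} $b$. The exceptional set $\mathbb{N}\setminus\mathfrak{D}_A$ has density zero but is not known to be finite, and the circle-method estimates are inherently statistical: they bound an average over $b$ and cannot preclude individual bad denominators at which the main term, of polynomial-but-subexponential size $N^{2\delta-1}$, is drowned by an error concentrated on a sparse set of frequencies. No local (congruence) obstruction to a bad $b$ is visible, so one expects none to exist; but excising the exceptional set appears to require either a substantially stronger level of distribution for the thin semigroup than the spectral gap provides, or an honest construction of a good numerator from the arithmetic of $b$. This is also why the methods of the present paper do not reach it: they pin down the \emph{top} of the $\mathcal{Z}$-spectrum, where the extremal lattices are rigid and classified by Markoff triples, whereas Zaremba's conjecture concerns the behaviour near the \emph{bottom} of the spectrum — the worst-approximable fraction with a prescribed denominator — which is governed by fine statistics of continued-fraction expansions rather than by any rigidity phenomenon.
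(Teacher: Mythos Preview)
You correctly recognize that the displayed statement is Zaremba's conjecture, which the paper records as an open problem and does \emph{not} attempt to prove; there is accordingly no proof in the paper to compare against. Your summary of the Bourgain--Kontorovich circle-method approach, the role of the Hausdorff dimension $\delta(A)$ and the spectral gap, and the density-one versus every-$b$ gap is accurate and appropriate as context, and your remark that the present paper concerns the top of the spectrum rather than the Zaremba regime is exactly right.
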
 Zaremba considered the largest values that the function $\mathcal{Z}$ attains and proved that the largest points of the spectrum are the points $\frac{F_{n-2}}{F_n}$, where $F_n$ is the Fibonacci sequence. The rational numbers that attain the value $\frac{F_{n-2}}{F_n}$ are $\frac{F_{n-1}}{F_n}$ and $\frac{F_{n-2}}{F_n}$ . He proceeded to show that the first point that lies outside this sequence is equal to $\frac{10}{29}$, but to our knowledge his investigation of the spectrum ended there. We give a complete description of all points of $\mathcal{Z}\cap (\frac{1}{3},\infty)$. The classification of these points will be achieved in Section \ref{Nielsen moves on finite words}. For now we state the following result regarding the limit points of $\mathcal{Z}$ larger than $\frac{1}{3}:$ 

\begin{thm}\label{Limit points}
    Let $m$ denote a Markoff number. Then $$\frac{2m}{\sqrt{9m^2-4}+3m}$$ is a limit point of $\mathcal{Z}\cap (\frac{1}{3},\infty).$ Furthermore, these are all the limit points in $\mathcal{Z}$ larger than $\frac{1}{3}$.
\end{thm}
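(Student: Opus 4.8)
The plan is to reduce the statement to a study of the finite continued fractions producing $\mathcal{Z}$-values above $\tfrac13$, which are classified in Section~\ref{Nielsen moves on finite words}. Write $\tfrac ab=[0;a_1,\dots,a_n]$ with $(a,b)=1$; by the continued-fraction description of $\mathcal{Z}$ recorded in Section~\ref{Equivalent definitions}, $\mathcal{Z}(\tfrac ab)^{-1}=\max_{0\le i\le n}\big([a_i;a_{i-1},\dots,a_1]+[0;a_{i+1},\dots,a_n]\big)$, the evident finite-word analogue of Markoff's formula for $\lambda_1$. In particular $\mathcal{Z}(\tfrac ab)>\tfrac13$ forces every partial quotient to be small and, by Section~\ref{Nielsen moves on finite words}, after performing the Nielsen reductions the word $a_1\cdots a_n$ takes the form $u\,w_m^{\,N}\,v$, where $w_m$ is the Markoff word attached to a Markoff number $m$ (so that $[\,\overline{w_m}\,]$ is the associated Markoff quadratic irrational) and $u,v$ run over a fixed finite list of reduced boundary segments. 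Throughout I set $M_m:=\tfrac{\sqrt{9m^2-4}}{m}$, the Markoff value of $w_m$, and $E_m:=\tfrac{M_m+3}{2}$, so that $E_m^{-1}=\tfrac{2m}{\sqrt{9m^2-4}+3m}$; note $M_m<3$, hence $M_m<E_m<3$.

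For the first assertion fix a Markoff number $m$ and take $\tfrac{a_N}{b_N}=[0;\,u_m\,w_m^{\,N}\,v_m]$ with $u_m,v_m$ the \emph{optimal} reduced boundary (for $m=1$ this is just $[0;1^{N}]$, giving Zaremba's points $F_{N-1}/F_{N+1}\to \tfrac{2}{3+\sqrt5}$). I estimate the two kinds of cuts in the formula above. For a cut $i$ lying in the interior of $w_m^{\,N}$, the finite continued fractions $[a_i;a_{i-1},\dots]$ and $[0;a_{i+1},\dots]$ are truncations of the two-sided $\overline{w_m}$-periodic quadratic irrationals and converge to them geometrically in the distance to the ends, so the weight of every interior cut tends to $M_m$ as $N\to\infty$. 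For a cut near either end of the word the flanking data is a one-sided $\overline{w_m}$-periodic irrational on one side and a short ``folded'' finite continued fraction on the other; here a direct computation — carried out by induction along the Markoff tree, using $w_{m_3}=w_{m_1}w_{m_2}$ together with the Markoff relation $m_1^2+m_2^2+m_3^2=3m_1m_2m_3$, and checked by hand at $m=1,2$ — shows that with the optimal boundary the heaviest end-cut has weight tending to $E_m$. Since $E_m>M_m$, for all large $N$ the maximum in the formula equals $E_m+o(1)$, whence $\mathcal{Z}(\tfrac{a_N}{b_N})=E_m^{-1}+o(1)=\tfrac{2m}{\sqrt{9m^2-4}+3m}+o(1)$, and this exceeds $\tfrac13$ once $N$ is large because $E_m<3$. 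As $\mathcal{Z}(\tfrac{a_N}{b_N})\in\Q$ while $E_m^{-1}$ is irrational, the values are pairwise distinct, so $\tfrac{2m}{\sqrt{9m^2-4}+3m}$ is a genuine limit point of $\mathcal{Z}\cap(\tfrac13,\infty)$.

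Conversely, suppose $z=\lim_{k}z_k$ with the $z_k\in\mathcal{Z}$ pairwise distinct and $z>\tfrac13$; choose $\epsilon>0$ with $z_k>\tfrac13+\epsilon$ for all large $k$, and write $z_k=\mathcal{Z}(a_k/b_k)$ with $(a_k,b_k)=1$. Distinctness forces $b_k\to\infty$; since a partial quotient $\ge 4$ already produces a cut of weight $\ge 4$ and hence $\mathcal{Z}<\tfrac13$, the continued fractions of $a_k/b_k$ have bounded entries, so their lengths tend to $\infty$. By Section~\ref{Nielsen moves on finite words} the reduced word of $a_k/b_k$ is $u_k\,w_{m_k}^{\,N_k}\,v_k$ with $(m_k,u_k,v_k)$ drawn from the finite list associated with the threshold $\tfrac13+\epsilon$; passing to a subsequence we may take $(m_k,u_k,v_k)\equiv(m,u,v)$ constant, and then $N_k\to\infty$ since $b_k\to\infty$. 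Running the estimate of the previous paragraph for this boundary gives $z_k\to\big(\max(M_m,E_{u,v})\big)^{-1}$, where $E_{u,v}$ is the limiting weight of the heaviest end-cut. The classification of Section~\ref{Nielsen moves on finite words} supplies the crucial dichotomy: for every reduced boundary one has either $E_{u,v}=E_m$ or $E_{u,v}\ge 3$. In the latter case $z\le\tfrac13$, contradicting $z>\tfrac13$; hence $E_{u,v}=E_m$, and since $M_m<E_m$ we conclude $z=E_m^{-1}=\tfrac{2m}{\sqrt{9m^2-4}+3m}$, as claimed.

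The routine parts are the continued-fraction convergence estimate and the two limiting arguments; the real content is the end-cut analysis — the identity $2E_m=M_m+3$ for the optimal boundary, and the dichotomy $E_{u,v}\in\{E_m\}\cup[3,\infty)$ for every reduced boundary. This is where I expect the work to concentrate. Concretely, the identity says that when a one-sided $\overline{w_m}$-periodic continued fraction is ``folded'' at the cusp, the resulting cut weight is the exact average of the Markoff value $M_m$ and the critical value $3$; I plan to establish it by induction on the Markoff tree as indicated above (the base cases $m=1$, where $E_1=\tfrac{3+\sqrt5}{2}$ recovers Zaremba's Fibonacci points, and $m=2$, where $E_2=\tfrac32+\sqrt2$, being immediate), and, in parallel, to read it off the geometry of the Markoff closed geodesic on the modular surface, where the constant $3$ enters as the width of the maximal embedded horoball — equivalently, as the left endpoint of the continuous part of the Lagrange spectrum.
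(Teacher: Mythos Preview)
Your outline is sound and the numerology $E_m=\tfrac{M_m+3}{2}$, $E_m^{-1}=\tfrac{2m}{\sqrt{9m^2-4}+3m}$ is exactly what is needed, but you are making the argument heavier than the paper's in two places, and in doing so you leave a claim that is not justified.

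First, the classification in Section~\ref{Nielsen moves on finite words} is sharper than the form $u\,w_m^{\,N}\,v$ you invoke. Theorem~\ref{Thm unique symmetric} (with Corollary~\ref{main cor}) says that every admissible chain is \emph{exactly} $\Pi_2\Pi^k$, where $\Pi=\Psi(ab)$ and $\Pi_1\Pi_2$ is its unique symmetric section; there is no freedom in the boundary at all. Consequently your ``dichotomy'' $E_{u,v}\in\{E_m\}\cup[3,\infty)$ is not needed, and as stated it is not something Section~\ref{Nielsen moves on finite words} supplies. The paper's converse is simply: if $N(\Psi_i)\to\infty$ then Lemma~\ref{section 1/3} forces $\mathcal{Z}(r_{k_i}^{\Psi_i})\to\tfrac13$, so any limit point above $\tfrac13$ arises from a fixed $\Psi$ with $k\to\infty$; Theorem~\ref{Thm unique symmetric} then gives the limit as $\big([\Pi]+[\Pi_2\overline{\Pi}]^{-1}\big)^{-1}$.

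Second, you propose to prove $2E_m=M_m+3$ by induction along the Markoff tree. The paper avoids this entirely by quoting Markoff's matrix form
\[
M_{\Psi(ab)}=\begin{pmatrix}3z-z' & *\\ z & z'\end{pmatrix},
\]
from which $[\Pi]=\tfrac{3z-z'}{z}$ and $[\Pi_2\overline{\Pi}]^{-1}=\tfrac{(3z-2z')-\sqrt{9z^2-4}}{2z}$ are read off directly; their sum is $\tfrac{3}{2}+\tfrac{\sqrt{9z^2-4}}{2z}$, and inverting gives $\tfrac{2z}{3z+\sqrt{9z^2-4}}$ in one line. Your tree induction would eventually reproduce this, but it is the long way around, and your end-cut/geodesic heuristic, while suggestive, is not a proof. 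If you adopt the precise form $\Pi_2\Pi^k$ and the Markoff matrix, both the forward and converse directions collapse to the short computation the paper gives.
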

We further prove the following regarding the distribution of the limit points of $\mathcal{Z}$ below $\frac{1}{3}$:
\begin{thm}\label{Uncountably}
    For every $\epsilon>0$, there exist uncountably many limit points of $\mathcal{Z}$ in $(\frac{1}{3}-\epsilon,\frac{1}{3}).$
\end{thm}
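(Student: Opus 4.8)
The plan is to produce, for every small $\eta>0$, a continuum of pairwise distinct limit points of $\mathcal Z$ of the form $1/t$ with $t\in(3,3+\eta)$; choosing $\eta$ small enough that $[\,1/(3+\eta),\,1/3\,)\subset(\tfrac13-\epsilon,\tfrac13)$ then finishes the proof. I work with the reformulation of Sections~\ref{Equivalent definitions} and~\ref{Nielsen moves on finite words}: for a rational $a/b=[0;a_1,\dots,a_n]$ one has $\mathcal Z(a/b)=1/\max_{0\le k<n}\ell_k$, where $\ell_k:=[a_{k+1};a_{k+2},\dots,a_n]+[0;a_k,a_{k-1},\dots,a_1]$, so that a value $1/t$ is a limit point of $\mathcal Z$ as soon as there is an infinite word $\xi$ with $\sup_k\ell_k(\xi)=t$ together with finite subwords of $\xi$ whose maximal window strictly increases to $t$.

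First I build a reservoir of \emph{defects}. Fix a large Markoff number $m$; by the classification of $\mathcal Z\cap(\tfrac13,\infty)$ the Markoff word $W=W_m$ satisfies $\ell_k(\overline W)\le g(m)$ for all $k$, where $g(m)=\tfrac32+\tfrac12\sqrt{9-4/m^2}<3$ and $3-g(m)<\eta/2$. Next I consider finite words $D$ obtained from $W$ by the admissible ``elementary perturbations'' of the Nielsen--move calculus (a single controlled insertion or substitution, and its iterates), arranged so that, placed inside a long $\overline W$--background, $D$ raises exactly one window past $3$, to a value $\tau(D)\in(3,3+\eta)$. Iterating the perturbations along the tree of admissible moves yields a Cantor scheme of such words whose values $\tau(D)$ form a countable set $T=T_m$ dense in a non-empty perfect set $K=K_m\subset(3,3+\eta)$. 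This self-similarity statement---that one genuinely reaches uncountably many accumulation values just above $3$, and not merely a single convergent sequence---is the heart of the matter, and it is exactly where the finite-word combinatorics of Section~\ref{Nielsen moves on finite words} is used.

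Now assemble: for each $t\in K$ approached from below by points of $T$ (all but countably many points of $K$), pick defects $D^{(1)},D^{(2)},\dots$ with $\tau(D^{(i)})\nearrow t$ and set
$$\xi_t= W^{N_0}\, D^{(1)}\,W^{N_1}\, D^{(2)}\,W^{N_2}\, D^{(3)}\,W^{N_3}\cdots,$$
with the exponents $N_i$ chosen so large that the window centred at $D^{(j)}$ differs from its background value $\tau(D^{(j)})$ by less than $t-\tau(D^{(j)})$, and likewise every window straddling two (hence far apart) defects stays below $t$. Then $\sup_k\ell_k(\xi_t)=\max\bigl(g(m),\ \sup_i\tau(D^{(i)})\bigr)=t$: the $\overline W$--stretches contribute only values $\le g(m)<3$, each $D^{(i)}$ contributes a value in $(3,t]$ that is $<t$, and straddling windows are pushed below $t$ by the choice of the $N_i$. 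Consequently the truncations of $\xi_t$ ending deep inside the $\overline W$--stretch beyond the $i$-th defect are rationals with $\mathcal Z=1/\tau(D^{(i)})$; these tend to $1/t$ and differ from it, so $1/t$ is a limit point of $\mathcal Z$. Finally $t\mapsto 1/t$ is injective on $K$, so $\{1/t:t\in K\}$ is uncountable and, for $\eta$ small, lies in $(\tfrac13-\epsilon,\tfrac13)$.

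The steps I expect to be routine are the window estimates: that $\overline W$ has all windows $\le g(m)$ (quoted from the classification), that a single admissible perturbation raises one and only one window and by a small amount, and the geometric bounds showing that well-separated defects do not interact. The genuine difficulty is the second step. The obstruction to a naive ``one sequence per binary string'' argument is that $\mathcal Z$ is governed by a \emph{supremum} of windows, so a bounded increasing family of window-values has a limit independent of any finitely-determined choices; the way around this is to let the \emph{defects themselves} range over a perfect family instead of repeating a fixed defect, which forces a precise description---furnished by the Nielsen-move formalism---of how $\tau(D)$ depends on $D$, together with a thickness/self-similarity argument ensuring that the resulting value set $T_m$ accumulates on an uncountable subset of $(3,3+\eta)$.
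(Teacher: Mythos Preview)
Your proposal has a genuine gap at precisely the place you yourself flag as ``the heart of the matter'': the assertion that iterating ``admissible elementary perturbations'' produces defect values $\tau(D)$ dense in a non-empty \emph{perfect} subset $K_m\subset(3,3+\eta)$ is stated, not proved. You appeal to ``the Nielsen-move formalism'' of Section~\ref{Nielsen moves on finite words} to supply this, but those moves are precisely the ones that \emph{preserve} admissibility; they generate words all of whose windows are $\le g(m)<3$, not defects with a window in $(3,3+\eta)$. What a ``controlled insertion or substitution'' actually is, why it raises exactly one window by a small amount, and---above all---why the resulting $\tau$-values accumulate on a continuum rather than on a countable set, are all left unspecified. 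Without that step the assembly construction has nothing to assemble, and the argument does not get off the ground.

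The paper's proof bypasses this machinery entirely and is far more direct. It works with the explicit rationals
\[
R(n_1,\dots,n_k)=[0;\,1^{n_1},2,2,1^{n_2},2,2,\dots,2,2,1^{n_k}],
\]
choosing all $n_i\ge N_0$ large (so $\mathcal Z(R)>\tfrac13-\epsilon$ by Lemma~\ref{section 1/3}), $n_1$ odd (so the chain is not admissible and $\mathcal Z(R)\le\tfrac13$ by the classification above $\tfrac13$), $n_1<n_i$ for $i\ge2$ (to make the inequality strict), and $n_2$ even and strictly larger than every other $n_i$ (so the maximal window sits at the first $\vert 22$). One then has the closed form
\[
\mathcal Z\bigl(R(n_1,\dots,n_k)\bigr)=\Bigl([2;2,1^{n_2},2,2,\dots,2,2,1^{n_k}]+[0;1^{n_1}]\Bigr)^{-1},
\]
and letting the tail $(n_3,n_4,\dots)$ range freely over integers in $(n_1,n_2)$ and $k\to\infty$ gives uncountably many distinct limits, simply by injectivity of continued-fraction expansions. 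No Markoff background, no defect calculus, and no separate perfect-set argument are needed: the uncountability comes for free from the free choices of the $n_i$.
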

\section{Equivalent definitions of the $\mathcal{Z}$ spectrum}\label{Equivalent definitions}

Let $p>1$ denote some natural number and $g=(g_1,g_2)$ some non-zero integer vector in $\R^2$ coprime to $p$. Zaremba's considerations regarding good bounds for the numerical computation of double integrals led him to consider integer lattices of the form $$\Lambda_{p,g}=\left\{(u,v)\in\Z^2: ug_1+vg_2\equiv 0\mod p\right\},$$ calling a vector $g$ ``good" if the quantity $p^{-1}\inf_{(u,v)\in\Lambda_{p,g},uv\neq 0}\vert uv\vert$ is large. It is clear that $\Lambda_{p,g}$ has volume $p$ and thus $\Lambda'_{p,g}=\frac{1}{\sqrt{p}}\Lambda_{p,g}$ is a unimodular rational lattice with $$\lambda_1^*(\Lambda'_{p,g})=p^{-1}\inf_{(u,v)\in\Lambda_{p,g},uv\neq 0}\vert uv\vert.$$ Our definition of the $\mathcal{Z}$-spectrum should now be clear. However, we give another equivalent definition in the language of continued fractions, which shall be fruitful for our considerations: For every unimodular rational lattice $\Lambda,$ there exists by definition some rational number $d$ such that $(c,a),(0,b)$ is a basis of the integral lattice $\frac{1}{\sqrt{d}}\Lambda.$ Since $\Lambda$ is unimodular, we must have $bc=d.$ Then $$\lambda_1^*(\Lambda)=\inf_{q,l,qc(qa+lb)\neq0}\vert d^{-1}qc(qa+lb) \vert=\inf_{q,l,qc(qa+lb)\neq0}\vert q(q\frac{a}{b}+l) \vert=\min_{q\in\mathbb{N}:0<q<b}q\left\vert\left\vert \frac{qa}{b}\right\vert\right\vert,$$ and hence $$\mathcal{Z}=\left\{\min_{q\in\mathbb{N},0<q<b}q\left\vert\left\vert \frac{qa}{b}\right\vert\right\vert,\, \frac{a}{b}\in\Q\right\}.$$ Now, by a theorem of Perron (\cite{perron}), we have that $$\min_{q\in\mathbb{N}:0<q<b}q\left\vert\left\vert \frac{qa}{b}\right\vert\right\vert=\frac{1}{\max_{1\leq i\leq n}([a_i,a_{i+1},...,a_n]+[0;a_{i-1},a_{i-2},...,a_1])},$$ where $\frac{a}{b}=[a_0;a_1,a_2,...,a_n]$ is its continued fraction expansion.
\section{Early considerations of admissible configurations}\label{Early considerations of admissible configurations}
Let $\rho$ denote some rational number with continued fraction expansion $[a_0;a_1,a_2,...,a_n]$. The chain $T=[a_1a_2...a_n]$ will be the chain associated to $\rho$ and conversely we associate to a chain $[a_1a_2,...a_n]$ the rational number equal to $\rho_T=[0;a_1,...,a_n]$. We shall be using $T$ and $\rho$ interchangably. Since $$\mathcal{Z}^{-1}(\rho)=\max_{i\in\{1,...,n\}}\left([a_i;a_{i+1},...,a_n]+[0;a_{i-1},...,a_1]\right),$$ if $T$ contains some number larger than $2$, we clearly have $\mathcal{Z}^{-1}(\rho)\geq3.$ We call the rational numbers $\rho$ satisfying $\mathcal{Z}(\rho)>\frac{1}{3}$ and their corresponding chains admissible. Hence, from now on we assume that the continued fraction sequence of $\rho$ is composed only of $1$'s and $2$'s. Markoff proved his celebrated theorem by first analyzing the properties of admissible chains. The main difference from our point of view is that one has to consider finite chains as opposed to the Markoff spectrum. However, more or less the following lemmata or variants of theirs can be found in textbooks with a standard treatment of the Markoff spectrum above $\frac{1}{3}$. See for example \cite{cassels}, \cite{MarkoffandLagrange}, \cite{bombieri2007continued}. We include their proofs for the convenience of the reader.

\begin{lemma} \label{121}
    If $T$ contains the chain $[121]$, then $\mathcal{Z}(T)\leq \frac{1}{3}.$
\end{lemma}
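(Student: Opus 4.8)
The plan is to use the formula
$$\mathcal{Z}^{-1}(T)=\max_{i\in\{1,\dots,n\}}\left([a_i;a_{i+1},\dots,a_n]+[0;a_{i-1},\dots,a_1]\right)$$
and exhibit, when $[121]$ occurs as a subchain $a_{k-1}a_ka_{k+1}=121$, an index $i$ for which the value of this two-sided expansion is at least $3$. The natural candidates are $i=k-1$, $i=k$, or $i=k+1$, since these are the positions at which we can exploit the local pattern $\dots 1\,2\,1\dots$. First I would isolate the forward and backward continued fractions around this window: writing $\alpha=[a_{k+1};a_{k+2},\dots,a_n]$ for the tail starting at the second $1$ and $\beta=[a_{k-2};a_{k-3},\dots,a_1]$ for the reversed head ending before the first $1$, the three candidate sums become
$$[1;2,\alpha]+[0;\beta],\qquad [2;\alpha]+[0;1,\beta],\qquad [1;\alpha]+[0;2,1,\beta].$$

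The key step is to show these three quantities cannot all be strictly less than $3$. For this I would use that $\alpha\ge 1$ and $\beta\ge 1$ (each is a continued fraction of positive integers, with the convention that an empty tail/head gives the boundary cases $\alpha=\infty$ or $[0;\beta]=0$, which only help). The middle expression is $2+\tfrac{1}{\alpha}+[0;1,\beta]=2+\tfrac1\alpha+\tfrac{1}{1+[0;\beta]}$; if this is $<3$ then $\tfrac1\alpha+\tfrac{1}{1+1/\beta}<1$, which forces $\alpha$ to be somewhat large and $[0;\beta]$ somewhat large, i.e. $\alpha\ge 2$ roughly and $\beta\ge 2$ roughly. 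But then in the first expression $[1;2,\alpha]=1+\tfrac{1}{2+1/\alpha}$ is close to $1+\tfrac12$ and $[0;\beta]$ is close to $\tfrac12$ or larger, pushing the first sum toward $3$; symmetrically the third expression $[1;\alpha]+[0;2,1,\beta]$ becomes large. I would make this precise by a short case analysis on the first symbols of $\alpha$ and $\beta$ (equivalently on $a_{k+1},a_{k+2}$ and $a_{k-2},a_{k-3}$), checking the finitely many boundary configurations directly and handling the generic case by the crude bounds $\tfrac1\alpha\le\tfrac12$ when $\alpha\ge 2$, etc.

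The main obstacle is the bookkeeping at the ends of the chain: when the $[121]$ pattern sits at or near the start or end of $T$, one of $\alpha$ or $\beta$ degenerates, and the three candidate sums collapse to fewer or simpler expressions. I would treat these boundary cases first and separately — e.g. if $T=[121]$ itself then $i=2$ gives $[2]+[0;1]=2+\tfrac12$... wait, that is only $\tfrac52$, so here I would instead note $T=[121]$ has $\rho_T=[0;1,2,1]=\tfrac35$ and compute $\mathcal{Z}^{-1}$ directly to confirm it is $\ge 3$, and similarly dispatch the other short cases — and then run the generic argument above for the interior occurrences. Once the inequality $\mathcal{Z}^{-1}(T)\ge 3$ is established in all cases, we conclude $\mathcal{Z}(T)\le\tfrac13$, as claimed.
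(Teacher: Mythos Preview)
Your plan is far more elaborate than needed, and in the process you miss the observation that makes this a one-line proof. You set $\alpha=[a_{k+1};a_{k+2},\dots,a_n]$ and then reason as though $\alpha$ could be large, writing that the hypothesis ``forces $\alpha$ to be somewhat large \dots\ $\alpha\ge 2$ roughly.'' But $a_{k+1}=1$, so
\[
\alpha=1+\frac{1}{[a_{k+2};\dots]}\le 2
\]
automatically (with $\alpha=1$ if there is no $a_{k+2}$); hence $\tfrac1\alpha\ge\tfrac12$ in every case. Likewise the backward piece $[0;a_{k-1},a_{k-2},\dots]=[0;1,\beta]=\dfrac{1}{1+1/\beta}\ge\tfrac12$, since $\beta\ge 1$ (or it equals $[0;1]=1$ when $k-1=1$). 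Thus your own ``middle expression'' at $i=k$ already gives
\[
2+\frac{1}{\alpha}+\frac{1}{1+1/\beta}\;\ge\;2+\frac12+\frac12\;=\;3,
\]
and the lemma follows immediately: no trichotomy over three indices, no case analysis on the next symbols of $\alpha$ and $\beta$, no separate boundary treatment. This is precisely the paper's argument, recorded there as $\mathcal{Z}^{-1}(T)\ge[2;1,1]+[0;1,1]=3$.

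Your boundary computation for $T=[121]$ also contains arithmetic slips: at $i=2$ the forward part is $[a_2;a_3]=[2;1]=3$ (you dropped $a_3$), the backward part is $[0;a_1]=[0;1]=1$ (not $\tfrac12$), so the sum is $4$; and $[0;1,2,1]=\tfrac34$, not $\tfrac35$. These are minor, but they stem from the same oversight: once you use that the forward tail begins with $a_{k+1}=1$, the boundary cases are the \emph{easiest} ones, not obstacles requiring a separate direct computation.
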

\begin{proof}
    If $T$ contains the chain $[121]$, then we have be definition $$\mathcal{Z}^{-1}(T)\geq [2;1,1]+[0;1,1]=3.$$
\end{proof}
Notice that a chain of the form $[a_1a_2...a_n2]$ corresponds to the same rational number as the chain $[a_1a_2,...,a_n11]$ and hence we identify these chains. Furthermore, because of the identity $\mathcal{Z}(\frac{a}{b})=\mathcal{Z}(1-\frac{a}{b}),$ we also identify the chains $[11a_1a_2,...a_n]$ and $[2a_1a_2...a_n].$ We make the following convention: \\ \\
\textit{Convention:} By the identifications above, we will assume from now on that every chain starts and ends with an even number of $2$'s. This number is allowed to be zero.
\begin{lemma} \label{212}
    If $T$ contains the chain $[212]$ and is not equal to $[11111]$, then $\mathcal{Z}(T)\leq \frac{1}{3}.$
\end{lemma}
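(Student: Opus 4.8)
The plan is to argue exactly as in Lemma \ref{121}: locate inside $T$ an index $i$ at which the two-sided sum $[a_i;a_{i+1},\dots,a_n]+[0;a_{i-1},\dots,a_1]$ is at least $3$, which immediately gives $\mathcal{Z}^{-1}(T)\geq 3$, i.e. $\mathcal{Z}(T)\leq\frac13$. Suppose $T$ contains $[212]$; write $T=[\,\mathbf{u}\,2\,1\,2\,\mathbf{v}\,]$, where $\mathbf{u}$ and $\mathbf{v}$ are (possibly empty) subchains of $1$'s and $2$'s, and the displayed $1$ sits at position $i$. The idea is that the contribution from position $i$ is
\[
[1;2,\mathbf{v}]+[0;2,\mathbf{u}],
\]
and since both one-sided continued fractions $[2,\mathbf{v}]$ and $[2,\mathbf{u}]$ have first partial quotient $2$, each of the two tails $[0;2,\mathbf{v}]$ and $[0;2,\mathbf{u}]$ is at least... — here one has to be careful, because $[0;2]=\tfrac12$ already, but if the chain after the initial $2$ continues with a $1$ it can drop below $\tfrac12$. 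So the naive estimate $[1;2,\dots]+[0;2,\dots]\geq 1+\tfrac12+\tfrac12 = 2$ is not by itself enough; we need to squeeze out the extra $1$.

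The key step is to use the convention that every chain begins and ends with an \emph{even} number of $2$'s, together with the structure forced by admissibility (only $1$'s and $2$'s, and by Lemma \ref{121} no occurrence of $[121]$). First I would handle the easy sub-case: if $\mathbf{u}$ is empty, then by the convention $T$ starts with an even number of $2$'s, so the chain actually starts $[2,2,\dots]$, i.e. $\mathbf{u}=\emptyset$ is impossible unless $T=[2\,1\,2\,\mathbf v]$ begins with a lone $2$, contradicting the parity convention — so in fact $\mathbf u$ ends in a $2$ coming from an even block, forcing the left tail $[0;2,\mathbf u']$ with $\mathbf u'$ beginning in $2$; symmetrically for $\mathbf v$. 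Tracking these parity constraints, the $212$ is really sitting inside a longer pattern, and one of the indices $i-1,i,i+1$ will see a sum $\geq 3$. The cleanest approach is probably to evaluate the sum at the index of one of the $2$'s flanking the central $1$: at position $i-1$ (the first $2$ of $212$) the contribution is $[2;1,2,\mathbf v]+[0;\mathbf u]\geq 2 + [0;\mathbf u]$, and the parity convention guarantees $\mathbf u$ either is empty (then examine the other end) or ends appropriately so that combined with the other $2$ we recover the missing amount; the genuinely extremal configuration where no index reaches $3$ is exactly $T=[11111]$, which is why it must be excluded.

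I expect the main obstacle to be the bookkeeping of these boundary cases: precisely because of the even-$2$'s convention and the short exceptional chain $[11111]$, a clean uniform inequality is not available, and one must split according to whether $\mathbf u$ and/or $\mathbf v$ are empty or short, checking by direct computation (e.g. $[11111]=[0;1,1,1,1,1]=\tfrac35$, whose value is $\mathcal{Z}=\tfrac{1}{[2;1,1]+[0;1,1]}$... wait, $[11111]$ contains no $2$ at all, so the hypothesis ``$T$ contains $[212]$'' already excludes it, meaning the exception is vacuous here and is stated only for parallelism with later lemmas) — so in fact the exclusion of $[11111]$ may be there just for stylistic consistency with the surrounding lemmas, and the real content is the parity-aided index search. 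Once the right index is identified in each case, each verification is a two- or three-term continued-fraction comparison, routine but needing care with the $1$-vs-$2$ first-partial-quotient asymmetry.
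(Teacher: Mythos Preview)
Your proposal never locates the clean argument and contains two concrete gaps.

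First, you misread the role of the exception $T=[11111]$. Under the identifications stated just before the lemma, the chain $[212]$ is rewritten as $[2111]$ and then as $[11111]$; so ``$T$ contains $[212]$ and $T\neq[11111]$'' is exactly the statement that $T$ contains $[212]$ \emph{properly}, i.e.\ the block $[212]$ can be extended by at least one further digit inside $T$. The exclusion is not stylistic; it is precisely what guarantees the extension step below, and indeed $\mathcal{Z}([11111])=\tfrac38>\tfrac13$.

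Second, and more importantly, you never perform the decisive case split. The paper's proof is a one--line dichotomy on that extra digit: if the extension of $[212]$ is by a $1$ (on either side) then $T$ contains $[121]$ and Lemma~\ref{121} finishes; if the extension is by a $2$ then $T$ contains $[2212]$ or $[2122]$, and evaluating the section at the inner $2$ of that block gives
\[
\mathcal{Z}^{-1}(T)\;\ge\;[2;1,2]+[0;2,1]=\tfrac{8}{3}+\tfrac{1}{3}=3,
\]
these two values being the worst possible right and left tails when all partial quotients lie in $\{1,2\}$. Your attempts to bound $[1;2,\mathbf v]+[0;2,\mathbf u]$ at the central $1$, or $[2;1,2,\mathbf v]+[0;\mathbf u]$ at a flanking $2$, cannot succeed uniformly without first isolating the ``neighbour is $1$'' case and dispatching it via Lemma~\ref{121}; once that case is removed, the remaining configuration already contains a neighbouring $2$, and the computation above applies. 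Your ``parity-aided index search'' is heading in the right direction but stops short of this split, and none of the estimates you write down are brought to a conclusion.
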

\begin{proof}
    If $T$ is not equal to $[11111]$, then $T$ contains the chain $[212]$, which can be extended inside $T$. If it is extended by a $1$, this contradicts Lemma \ref{121}. If it is extended by a $2$, we have that $T$ contains the chain $[2212]$ or the chain $[2122]$. In either case, $$\mathcal{Z}^{-1}(T)\geq [2;1,2]+[0;2,1]=3.$$
\end{proof}
For some finite chain $T$, define $T^*$ to be its reverse chain. A section of $T$ is by definition an ordered pair of finite chains $R,S$ such that $[T]=[RS].$ We define $\mathcal{Z}(R\vert S)=\frac{1}{\rho_{R^*}+\frac{1}{\rho_{S}}}.$
\begin{lemma}\label{section 1/3}
    $\mathcal{Z}(S^*11\vert22S)=\frac{1}{3}.$
\end{lemma}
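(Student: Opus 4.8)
The plan is to reduce the claim to a one-line computation in a single real parameter. Write $S=[s_1s_2\dots s_k]$ and set $x=\rho_S=[0;s_1,s_2,\dots,s_k]$, with the convention $x=0$ when $S$ is the empty chain. Applying the definition $\mathcal{Z}(R\mid S)^{-1}=\rho_{R^*}+\rho_S^{-1}$ with $R=S^*11$ and with $22S$ playing the role of the second block, the first step is the reversal bookkeeping: reversing the concatenation $S^*[11]$ returns $[11](S^*)^*=[11]S$, so $(S^*11)^*=11S$ and hence $\rho_{(S^*11)^*}=\rho_{11S}=[0;1,1,s_1,\dots,s_k]$.

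The second step is the explicit evaluation. Directly from the continued fraction algorithm,
$$\rho_{11S}=\frac{1}{1+\dfrac{1}{1+x}}=\frac{1+x}{2+x},\qquad \frac{1}{\rho_{22S}}=[2;2,s_1,\dots,s_k]=2+\frac{1}{2+x}=\frac{5+2x}{2+x}.$$
Adding these,
$$\rho_{(S^*11)^*}+\frac{1}{\rho_{22S}}=\frac{(1+x)+(5+2x)}{2+x}=\frac{3(2+x)}{2+x}=3,$$
and therefore $\mathcal{Z}(S^*11\mid 22S)=\tfrac13$, as claimed. The degenerate case $S=\varnothing$ is covered by the same computation with $x=0$, where it reads $\tfrac12+\tfrac52=3$.

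There is essentially no obstacle here: the only points that need care are the identity $(S^*11)^*=11S$ and the value of $\rho_S$ on the empty chain, both of which are immediate. The role of the lemma lies downstream — it pins down the exact ``boundary'' sections at which an admissible chain stops improving on $\tfrac13$, and this rigidity is what later constrains the shape of admissible chains.
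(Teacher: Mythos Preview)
Your proof is correct and follows essentially the same approach as the paper. The paper's one-line proof invokes the identity $[0;2,x]+[0;1,1,x]=1$; your computation with $x=\rho_S$ is precisely the explicit verification of this identity, after separating off the leading $2$ from $[2;2,S]$.
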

\begin{proof}
    This is simply a restatement of the identity $$[0;2,x]+[0;1,1,x]=1.$$
\end{proof}
 By definition, $\mathcal{Z}(T)$ is equal to the minimum $\mathcal{Z}(R\vert S)$ as $(R,S)$ ranges over all of its sections. It is clear now that we have the following identity $$\mathcal{Z}(T)=\mathcal{Z}(T^*).$$ 
 \begin{defn}
 We define the following ordering between chains: $$[T_1]<[T_2]\iff \rho_{T_1}>\rho_{T_2}.$$
 \end{defn}
\begin{lemma}\label{R>S}
    If $T$ has the section $(R^*1122S)$ and $\mathcal{Z}(T)>\frac{1}{3}$ then $[S]<[R].$
\end{lemma}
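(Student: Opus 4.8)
The plan is to test $T$ against the single section $(R^{*}11\vert 22S)$ and to compare the resulting value with the critical configuration of Lemma~\ref{section 1/3}. Since $\mathcal{Z}(T)$ is by definition the minimum of $\mathcal{Z}(R'\vert S')$ over all sections $(R',S')$ of $T$, the hypothesis $\mathcal{Z}(T)>\frac{1}{3}$ forces $\mathcal{Z}(R^{*}11\vert 22S)>\frac{1}{3}$. Unwinding the definition $\mathcal{Z}(R'\vert S')=\dfrac{1}{\rho_{(R')^{*}}+1/\rho_{S'}}$ with $R'=R^{*}11$ and $S'=22S$, and using the reversal identity $(R^{*}11)^{*}=11R$, this inequality becomes
$$\rho_{11R}+\frac{1}{\rho_{22S}}<3 .$$

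Next I would invoke Lemma~\ref{section 1/3}, which after the analogous reversal $(S^{*}11)^{*}=11S$ asserts precisely that
$$\rho_{11S}+\frac{1}{\rho_{22S}}=3 .$$
Subtracting the first display from the second cancels the common term $1/\rho_{22S}$ and leaves $\rho_{11R}<\rho_{11S}$. Finally, for every chain $X$ one computes $\rho_{11X}=\dfrac{1+\rho_{X}}{2+\rho_{X}}$, and since $t\mapsto\dfrac{1+t}{2+t}$ is strictly increasing on $[0,\infty)$, the inequality $\rho_{11R}<\rho_{11S}$ is equivalent to $\rho_{R}<\rho_{S}$, which is exactly the statement $[S]<[R]$ in the ordering on chains.

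There is no substantial obstacle in this argument — it is bookkeeping rather than analysis. The two points that require care are the reversal conventions $(R^{*}11)^{*}=11R$ and $(S^{*}11)^{*}=11S$, which guarantee that Lemma~\ref{section 1/3} is being applied to the correct chain, and the degenerate cases where $R$ or $S$ is empty: there one sets $\rho_{\emptyset}=0$ and verifies that both displayed relations above hold verbatim (for empty $S$, for instance, the identity is $\rho_{11}+1/\rho_{22}=\frac{1}{2}+\frac{5}{2}=3$).
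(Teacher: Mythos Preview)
Your argument is correct and is exactly the approach the paper has in mind: the paper's own proof is the one-line remark that the lemma is a direct corollary of Lemma~\ref{section 1/3} together with monotonicity of continued fractions, and your write-up simply unpacks that sentence in full detail. The only addition on your side is the explicit formula $\rho_{11X}=(1+\rho_X)/(2+\rho_X)$ and the treatment of the empty-chain edge cases, neither of which changes the method.
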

\begin{proof}
    This is a direct corollary of Lemma \ref{section 1/3} combined with monotonicity properties of continued fractions.
\end{proof}

\begin{lemma}\label{only even}
    If $T=[1^{e^1_{1}}2^{e^2_{1}}1^{e^1_{2}}2^{e^2_{2}}...1^{e^1_{n}}2^{e^2_{n}}]$, where $e^j_i$ are positive integers, except possibly $e^1_1$ and $e^2_n$ which are allowed to be $0,$ then all $e^j_i$ are even or $T$ consists only of $1$'s or only of $2$'s.
\end{lemma}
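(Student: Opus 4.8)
The plan is to argue that any maximal block $1^{e}$ or $2^{e}$ with odd exponent, sitting properly between two other blocks, forces $\mathcal{Z}(T)\le\frac13$ via the section inequality of Lemma \ref{R>S}, unless we are in one of the degenerate cases. Concretely, suppose $T$ contains a block $2^{e}$ with $e$ odd and with a $1$ on each side (so $T=[\cdots 1\,2^{e}\,1\cdots]$), and assume $\mathcal{Z}(T)>\frac13$; I want to derive a contradiction. Since $\mathcal{Z}(T)>\frac13$, by Lemma \ref{121} $T$ cannot contain $[121]$, so $e\ge 2$, hence $e\ge 3$. First I would use the convention (every chain starts and ends with an even number of $2$'s) together with Lemmas \ref{121} and \ref{212} to rule out the extreme cases and to guarantee the block $2^e$ is genuinely interior, i.e.\ $T=[R^*1\,2^{e}\,1S]$ for some chains $R,S$ (the $1$'s adjacent to $2^e$ exist because consecutive $2$'s would either extend the block or, combined with the convention, land us in a case covered by Lemma \ref{212}).

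The key computation is to compare the two sections that cut $T$ on either side of the central run of $2$'s. Write $e=2k+1$. Consider the section $(R^*1\,2^{k}\mid 2^{k+1}1S)$ and the section $(R^*1\,2^{k+1}\mid 2^{k}1S)$. By Lemma \ref{R>S} applied appropriately — after normalizing the chains $R^*1\,2^k$ and $2^k1S$ to the form $(\cdot)^*1122(\cdot)$ — the hypothesis $\mathcal{Z}(T)>\frac13$ forces a strict inequality between $[2^{k}1S]$-type and $[2^{k}1R]$-type tails in \emph{both} directions, which is impossible. More precisely: applying Lemma \ref{R>S} to the first section gives one comparison of the form $[\alpha]<[\beta]$, and applying it to the second section (which splits the \emph{same} chain, just one $2$ over) gives the reverse comparison $[\beta]<[\alpha]$, a contradiction. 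The content of Lemma \ref{R>S} is exactly that $\mathcal{Z}(R^*1122S|\,)>\frac13$ pins down which side is larger, and straddling an odd run of $2$'s lets us invoke it with the two sides swapped. The same argument with the roles of $1$ and $2$ interchanged handles an interior block $1^{e}$ with $e$ odd; here one uses the identification $[\cdots 11\cdots]\leftrightarrow[\cdots 2\cdots]$ and Lemma \ref{212} (rather than \ref{121}) to make the adjacent blocks $2$'s and the run genuinely interior.

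It remains to treat the boundary blocks $e^2_1$ (a leading run $1^{e^1_1}2^{e^2_1}$) and, by the reversal symmetry $\mathcal{Z}(T)=\mathcal{Z}(T^*)$, $e^1_n$ as well. By the convention the leading run of $2$'s already has even length when $e^1_1=0$; when $e^1_1\ge 1$ I would apply the interior argument to the block $2^{e^2_1}$, whose left neighbor is the block $1^{e^1_1}$ and whose right neighbor is $1^{e^1_2}$, exactly as above — the only subtlety is that one side of the relevant section is now the short chain $1^{e^1_1}$, and one checks the section inequality still applies or else $T$ degenerates to an all-$1$'s (resp.\ all-$2$'s) chain, which is the stated exception. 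I expect the main obstacle to be purely bookkeeping: matching each section to the normalized shape $(\cdot)^*1122(\cdot)$ required by Lemma \ref{R>S} after the $[2]\leftrightarrow[11]$ identifications, and making sure the two sections one compares really do straddle the \emph{same} odd run so that the two applications of Lemma \ref{R>S} contradict each other rather than merely constraining different tails. Once the normalization is set up carefully, each individual inequality is an immediate invocation of Lemma \ref{R>S} or a one-line continued-fraction monotonicity check.
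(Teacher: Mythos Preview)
Your key step — applying Lemma~\ref{R>S} to the two sections $(R^*1\,2^{k}\mid 2^{k+1}1S)$ and $(R^*1\,2^{k+1}\mid 2^{k}1S)$ that cut the odd $2$-block internally — does not go through. Lemma~\ref{R>S} rests on the identity $[0;2,x]+[0;1,1,x]=1$ of Lemma~\ref{section 1/3}, which is tied specifically to a section of the shape $(\cdot)11\mid 22(\cdot)$; there is no companion identity for a cut of the shape $(\cdot)2\mid 2(\cdot)$. If you write out $\mathcal{Z}^{-1}$ for your two sections you obtain
\[
2+[0;\underbrace{2,\ldots,2}_{k},1,R]+[0;\underbrace{2,\ldots,2}_{k},1,S]
\quad\text{and}\quad
2+[0;\underbrace{2,\ldots,2}_{k+1},1,R]+[0;\underbrace{2,\ldots,2}_{k-1},1,S],
\]
and the hypothesis $\mathcal{Z}(T)>\tfrac13$ merely says each of these is $<3$; both inequalities point the same way and cannot contradict one another, so no comparison of the form $[\alpha]<[\beta]$ versus $[\beta]<[\alpha]$ falls out. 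The ``normalization'' you invoke does not help either: the identifications $[\ldots,2]\leftrightarrow[\ldots,1,1]$ and $[1,1,\ldots]\leftrightarrow[2,\ldots]$ are legitimate only at the \emph{ends} of the entire chain (they encode the non-uniqueness of a finite continued fraction), not at an interior position of a section.

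The paper's argument instead cuts at a $1\mid 2$ boundary, where Lemma~\ref{R>S} applies verbatim. Starting from an odd block of $1$'s, say $1^l$ with $l\ge 3$ (the case $l=1$ being handled by Lemma~\ref{212}), take the section $[R^*1^l\mid 2^m1^nS]$; since $m\ge 2$ by Lemma~\ref{121} and the convention, Lemma~\ref{R>S} gives $[2^{m-2}1^nS]<[1^{l-2}R]$. As $l-2\ge 1$ the right side begins with $1$, which forces $m=2$ and then $n$ odd with $n\le l-2$. Applying the same step to the section $[S^*1^n\mid 2^m1^lR]$ of $T^*$ yields $l\le n-2$, the contradiction. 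So your high-level picture — two invocations of Lemma~\ref{R>S} producing opposite inequalities — is the right one, but the two sections must sit at the two $1$--$2$ boundaries flanking the $2$-block (one read in $T$, one in $T^*$), and what clash are the lengths of the two neighbouring $1$-blocks, not the two halves of a single block.
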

\begin{proof}
    Assume that $T$ contains an odd power of $1$. We then have a section of either $T$ or $T^*$ of the form $[R^*1^l\vert 2^m 1^n S],$ where $l$ is odd, $m\geq 1$ and $n$ is allowed to be zero. If $l=1$, then either $R=\emptyset$, which forces $T$ to consist only of $1$'s, or $R\neq\emptyset$ and we have a contradiction by Lemma \ref{212}. By the convention and the fact that we cannot have a subchain of the form $[121],$ $m\geq 2.$ Hence by Lemma \ref{R>S}, we have $$[2^{m-2}1^nS]<[1^{l-2}R].$$ This forces $m$ to be equal to $2$ and $n\leq l-2$ to be odd. Considering the section $[S^*1^n\vert 2^m1^lR]$ of $T^*,$ we have that $l\leq n-2$ in a similar manner- a contradiction. Therefore, $1$ appears only in even clusters. Similar considerations regarding the clusters of $2$ conclude the proof of the lemma.
\end{proof}

From now on we assume that our chain does not consist only of $1$'s or only of $2$'s. These chains are already trivially known to be admissible. Since $1$'s and $2$'s appear in even clusters, we will from now own be denoting $a=[22]$ and $b=[11],$ following Bombieri (\cite{bombieri2007continued}). 
\begin{lemma}\label{1 or 2 is zero}
    With the notation of Lemma \ref{only even}, for either $j=1$ or $j=2$, $$e^j_i\leq 2$$ for all $i$.
\end{lemma}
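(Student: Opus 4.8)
The plan is to argue by contradiction, exploiting the now-standard language of $a=[22]$ and $b=[11]$ together with the reversal and complementation symmetries already set up. Suppose, toward a contradiction, that the conclusion fails: there is a cluster of $1$'s of length (in $a,b$-units) at least $4$, say a block $b^k$ with $k\geq 2$ appearing in $T$, and also a cluster of $2$'s of length at least $4$, say a block $a^l$ with $l\geq 2$. By Lemma \ref{only even} we already know all clusters have even $1$- and $2$-exponents, so these are genuine powers of $b$ and $a$ respectively. The key observation is that since $T$ does not consist only of $1$'s or only of $2$'s, each such maximal block is flanked on at least one side by a block of the other letter, giving a section of $T$ (or of $T^*$) of the shape $(R^*\,b^{k}\,a\,S)$ with $k\geq 2$, where the $a$ just after $b^k$ is forced because the cluster is maximal and $[121]$ is forbidden; similarly a section $(R'^*\,a^{l}\,b\,S')$ with $l\geq 2$.

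First I would extract the quantitative consequence of having a long block. Consider the section $(R^*\,b^{k-1}\,\vert\, b\,a\,S)$, i.e. split $T$ in the middle of the $b$-block. Lemma \ref{R>S}, applied after peeling off one $b=[11]$ on each side of a $\cdots 11\,22\cdots$ juncture, will say that the ``tail'' on one side dominates the tail on the other in the chain order $<$ of the Definition. Concretely, from the section $(R^*\,1^{2k}\,\vert\, 2^{2}\,\cdots)$ one gets, via Lemma \ref{R>S}, that $[b\,a\,S\,\cdots]<[b^{k-2}R\cdots]$, and iterating this on both a $b$-heavy and an $a$-heavy location forces two incompatible inequalities between the same pair of chains — exactly the mechanism used at the end of the proof of Lemma \ref{only even}, where the symmetric pair of estimates $m\le 2$ and the odd-power analysis collided. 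Here the collision should instead be between "some $b$-cluster has length $\geq 4$" and "some $a$-cluster has length $\geq 4$": the first, fed into Lemma \ref{R>S}, makes the $b$-side chain large (small $\rho$), while the second makes an overlapping $a$-side chain large, and comparing them along a common section yields $[X]<[Y]$ and $[Y]<[X]$.

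The cleanest route is probably to localize: look at a shortest subchain of $T$ that simultaneously witnesses a $b^{k}$ with $k\ge2$ and an $a^{l}$ with $l\ge2$, necessarily of the form $b^{k} w a^{l}$ or $a^{l} w b^{k}$ for some (possibly empty) $a,b$-word $w$ with no long blocks. Then apply Lemma \ref{R>S} at the junction on the far side of $b^k$ and again at the junction on the far side of $a^l$; since $[b]<[a]$ in our ordering (because $[0;1,1,x]>[0;2,x]$, which is the content of Lemma \ref{section 1/3}), the two applications push in opposite directions and pin down $k$ and $l$ so tightly that no consistent choice with $k,l\geq2$ survives, except the already-excluded all-$1$'s or all-$2$'s chains. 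The main obstacle I anticipate is the bookkeeping of the intermediate word $w$: Lemma \ref{R>S} only compares the chains obtained after stripping a single $11\,22$, so one must set up an induction (or a telescoping sequence of such comparisons) showing that a long homogeneous block on one end strictly wins every comparison as it propagates past $w$ toward the block on the other end — and making the base case and the propagation step precise, while respecting the standing convention that chains begin and end with an even number of $2$'s, is where the real care is needed. Everything else is a direct appeal to Lemmas \ref{121}, \ref{212}, \ref{section 1/3}, and \ref{R>S}.
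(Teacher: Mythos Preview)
Your setup matches the paper's: argue by contradiction, pass to a minimal configuration witnessing both a long $b$-block and a long $a$-block, and invoke Lemma~\ref{R>S}. You also correctly observe that in a shortest such witness the intermediate word $w$ has no long blocks, which (since blocks alternate) forces $w=(ab)^{k}$; so the witness is a subchain $b^{e}(ab)^{k}a^{f}$ with $e,f\geq 2$, and one takes $k$ minimal. Where your plan goes astray is in the execution. The ``far side'' junctions of $b^{e}$ and of $a^{f}$ that you propose to use are $a\vert b$ (that is, $22\vert 11$) junctions, to which Lemma~\ref{R>S} does not apply as stated; after reversing, the two resulting inequalities involve disjoint pieces of $T$ and do not obviously collide. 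More importantly, the induction or telescoping through $w$ that you flag as the main obstacle is not needed at all.

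The paper applies Lemma~\ref{R>S} exactly once, at the $b\vert a$ junction immediately preceding $a^{f}$: the section $R^{*}\,b^{e}(ab)^{k-1}ab\ \vert\ a\,a^{f-1}S$ yields $[a^{f-1}S]<[a(ba)^{k-1}b^{e}R]$. Comparing the letter after the common initial $a$ forces $f=2$; then comparing tails forces the continuation $S$ to begin with $(ba)^{k'}b^{s_{1}}$ for some $k'\leq k-1$ and $s_{1}\geq 2$. But then $T$ (equivalently $T^{*}$) already contains a subchain of the required form with parameter $k'<k$, contradicting the minimality of $k$. So the entire argument is a single application of Lemma~\ref{R>S} fed back into the minimality hypothesis; once you recognise that $w=(ab)^{k}$, no propagation through $w$ is required.
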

\begin{proof}
Suppose that this is not the case. Then there exists at least one subchain of the form $[b^e(ab)^ka^f]$ with $e,f\geq 2.$ Consider $k$ minimal such that a subchain of that form exists. Then for some chains $R,S$ $T$ has a section of the form $[R^*b^e(ab)^{k-1}ab\vert aa^{f-1}S].$ By Lemma \ref{R>S}, $$[a^{f-1}S]<[a(ba)^{k-1}b^eR].$$ Hence, $f=2$ and $S$ must start with $(ba)^{k'}b^{s_1},$ where $k'\leq k-1$ and $s_1\geq 2$, which is absurd by minimality of $k$.
\end{proof}

\section{Nielsen moves on finite words}\label{Nielsen moves on finite words}
Regarding the start and ending of admissible chains we have the following:
\begin{lemma}\label{starting}
    If $\mathcal{Z}(T)>\frac{1}{3},$ then $T$ starts and ends with $b$.
\end{lemma}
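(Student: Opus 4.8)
The plan is to use the structural results already established, especially Lemma \ref{only even} and Lemma \ref{1 or 2 is zero}, together with the convention that every admissible chain begins and ends with an even number of $2$'s (possibly zero). Recall that in the $a=[22]$, $b=[11]$ notation an admissible chain $T$ not consisting only of $1$'s or only of $2$'s is a word in $a$ and $b$; by the convention it begins with $b$ or with $a$, and we must rule out the latter, and symmetrically at the end (using $\mathcal{Z}(T)=\mathcal{Z}(T^*)$). So the whole statement reduces to: if $T$ begins with $a$, then $\mathcal{Z}(T)\leq\tfrac13$.

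First I would take an admissible $T$ beginning with $a=[22]$ and write $T=a^f W$ where $W$ either is empty, or begins with $b$. If $W$ is empty then $T$ consists only of $2$'s, excluded by hypothesis, so $W$ begins with $b$, say $T=a^f b^e W'$ with $e\geq 1$. By the convention the chain actually starts with an even number of $2$'s, so $f\geq 1$; and by Lemma \ref{1 or 2 is zero} one of the two letters occurs only with exponent $\leq 1$ throughout $T$ — but since both $a^f$ (with $f\geq 1$) and $b^e$ (with $e\geq 1$) occur, and by the identifications powers are even, the "$\leq 2$'' bound of Lemma \ref{1 or 2 is zero} forces $f=2$ or $e=2$. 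I would then look at the section of $T$ right at the junction of the initial $a$-block with what follows. Writing $T=[R^*1122S]$ with $R^*=[]$ the empty chain corresponding to $\rho_{R^*}=0$ — i.e. taking the section at the very front after two $2$'s — Lemma \ref{section 1/3} with $R=\emptyset$ gives $\mathcal{Z}([]1122S)=\frac{1}{0+\frac{1}{\rho_{22S}}}=\rho_{[22S]}<1$, which is not quite $\le\frac13$ directly; the point instead is that $R^*$ being empty makes $\mathcal{Z}(R^*\mid S)$ genuinely large, i.e. this is the wrong section to use.

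The right approach is the same one used in Lemma \ref{only even} and Lemma \ref{1 or 2 is zero}: exhibit a single explicit section $(P,Q)$ of $T$ (or of $T^*$) with $\mathcal{Z}(P\mid Q)\le\frac13$, which then forces $\mathcal{Z}(T)\le\frac13$ since $\mathcal{Z}(T)$ is the minimum over sections. Concretely, if $T$ begins with $aa$ (the case $f\ge2$, hence $=2$ by Lemma \ref{1 or 2 is zero}), then $T=[2{,}2{,}2{,}2{,}\dots]$ has the subchain $[2222]$, and directly from Perron's formula $\mathcal{Z}^{-1}(T)\ge[2;2,\dots]+[0;2,\dots]\ge[2;2]+[0;2]=\frac52+\frac12=3$, wait — more carefully, evaluating at the index in the middle of the $[2222]$ block gives $[2;2,\ldots]+[0;2,\ldots]>[2;\overline 1]+[0;\overline{1}]$ is false, so one checks $[2;2]+[0;2,\ldots]\ge 2+\tfrac12=\tfrac52$ is not enough and instead must use that after $aa$ there is a $b$, giving the subchain $[2{,}2{,}1{,}1]$ with the evaluation $[1;1,\ldots]+[0;2,2,\ldots]\le 2 + \tfrac{1}{[2;2]}=2+\tfrac25<3$, still not enough — so the genuinely correct section is $[1;2,2]+[0;2]$ type, i.e. at the junction $T=[\dots a\mid a b \dots]$ we get $\mathcal{Z}^{-1}\ge[2;2,1,1,\dots]+[0;2]$. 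I would therefore systematically split into the cases ``$T$ starts with $aa$'' and ``$T$ starts with $ab$'', and in each case locate the section whose two continued fractions sum to at least $3$; the arithmetic is of the same flavour as Lemmas \ref{121}, \ref{212}.

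The main obstacle I anticipate is purely bookkeeping: making sure the section I pick actually exists inside $T$ (not just inside an infinite periodic extension) and that the convention about even blocks has been correctly applied so that no boundary case — such as $T=aa$ itself, or $T$ very short like $[2222]$ — slips through as a spurious counterexample; these tiny chains must be checked by hand against the list of trivially admissible chains and against $[11111]$ from Lemma \ref{212}. Once the case analysis is set up, each case is a one-line application of Perron's formula exactly as in the proof of Lemma \ref{212}, so the real content is organizing the finitely many head-patterns $aa\cdots$ and $ab\cdots$ and checking the corresponding inequality $\mathcal{Z}^{-1}(T)\ge 3$ in each.
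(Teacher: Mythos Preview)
Your proposal never arrives at a proof. You correctly identify the goal---exhibit a section of $T$ (or $T^*$) with $\mathcal{Z}^{-1}\ge 3$---but every concrete attempt you make either miscomputes or falls short, and you close by deferring the ``real content'' to an unspecified case analysis on head-patterns. The difficulty you run into is genuine: when $T$ begins with $a^f$ for $f\ge 2$, no section concentrated near the head yields $\ge 3$ (for instance at position $2$ one only gets $[2;2,2,\ldots]+[0;2]<3$), so a purely local analysis of the first few letters cannot finish the argument. Invoking Lemma~\ref{1 or 2 is zero} does not rescue this, and in fact the paper's proof does not use that lemma at all.

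The paper avoids this entirely by working at the \emph{opposite} end and using Lemma~\ref{R>S} as a structural comparison rather than as a source of numerical bounds. Assuming (after passing to $T^*$ if necessary) that $T$ \emph{ends} with $a$, one takes the section at the last $b\mid a$ junction, $T=[R^*b^sa^kb^{m}\mid a^{l}]$ with $l\ge 2$ (the case $l=1$ being disposed of via Lemma~\ref{121} and the identification $[122]\sim[1211]$). Lemma~\ref{R>S} then forces $[a^{l-1}]<[b^{m-1}a^kb^sR]$; because the right-hand tail $a^{l-1}$ is so short, this inequality collapses the left-hand side completely, giving $m=1$, $s=0$, $R=\emptyset$, hence $T=a^kba^l$ with $k<l$. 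Running the identical argument on $T^*=a^lba^k$ yields $l<k$, a contradiction. The idea you are missing is that Lemma~\ref{R>S} applied at the \emph{extremal} junction pins down the global shape of $T$, after which the symmetry $\mathcal{Z}(T)=\mathcal{Z}(T^*)$ produces incompatible inequalities; no explicit section value ever needs to be computed.
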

\begin{proof}
    Assume that $T$ ends with $a$. Then $T$ has a section of the form $[R^*b^sa^kb^{m-1}b\vert aa^{l-1}],$ where $m\geq 1$ and $l\geq 2$, since if $l=1$, the chain ends in $[122],$ which by our identification is the same as $[1211]$, which is absurd by Lemma \ref{121}. By Lemma \ref{R>S} $$[a^{l-1}]<[b^{m-1}a^kb^sR]$$ and therefore we must have $m=1$, $k\leq l-1,$ $s=0,$ $R=\emptyset$ and thus $T=[a^kba^l]$ with $k<l.$ Symmetrically, we obtain $l<k$, which is a contradiction.
\end{proof}
By Lemma \ref{1 or 2 is zero} and Lemma \ref{starting}, an admissible $T$ can have one of the following two forms: \begin{enumerate}
    \item Type I: $b^{e_1}ab^{e_2}a...ab^{e_n}$ or
    \item Type II: $ba^{e_1}ba^{e_2}...a^{e_n}b$.
\end{enumerate}

In either case, we call $(e_1,e_2,...,e_n)$ the characteristic sequence of $T$. Let $A$ denote the set of admissible words. For a word on the letters $a$ and $b$, the Nielsen moves $U$ and $V$ are classically defined as: $$U:a\rightarrow ab, b\rightarrow b$$ and $$V:a\rightarrow a, b\rightarrow ab.$$ The Nielsen moves are automorphisms of the free group $F_2=\langle a,b\rangle$ and every automorphism of the free group can be constructed as a word on the autoprhisms $U,V$ and their inverses. When dealing with finite words, it will become apparent that we have to ``correct" the definition of Nielsen moves, so that we respect the admissibility property of our chains. We define the variants of $U$ and $V$ on $A$, which we shall call $\overline{U}$ and $\overline{V}$ as follows. For $T\in A$, $$\overline{U}(T)=bU(T)\text{ and }\overline{V}(T)=a^{-1}V(T).$$ Notice that $\overline{V}$ is well defined since $V(T)$ always starts with an isolated copy of $a$. In the next proposition we shall show that $\overline{U}$ and $\overline{V}$ fix $A$ and furthermore, $\overline{U}$ is invertible on words of Type II and $\overline{V}$ is invertible on words of Type I. We will be referring to these adjusted Nielsen moves as $A-$Nielsen moves. We prove:

\begin{prop}\label{Nielsen moves}
    $$\mathcal{Z}(T)>\frac{1}{3}\iff \mathcal{Z}(\overline{U}(T))>\frac{1}{3}$$ and similarly $$\mathcal{Z}(T)>\frac{1}{3}\iff \mathcal{Z}(\overline{V}(T))>\frac{1}{3}.$$
\end{prop}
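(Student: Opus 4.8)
The plan is to transport the property ``$\mathcal Z>\tfrac13$'' across each move by working with the section description $\mathcal Z(T)=\min_{(R,S)}\big(\rho_{R^{*}}+\rho_S^{-1}\big)^{-1}$. The first task is to pin down which sections matter: by the structural Lemmas~\ref{only even}--\ref{1 or 2 is zero} together with the elementary bounds $[0;1,1,x],[0;2,2,x]<1$ and $[1;1,x]<2$, one has $\mathcal Z(R\vert S)>\tfrac13$ at every section except the ``active'' ones -- those whose cut falls between a run of $1$'s and a run of $2$'s, or inside an isolated block $[22]$ -- and at an active section Lemma~\ref{section 1/3} together with Lemma~\ref{R>S} rewrites $\mathcal Z(R\vert S)>\tfrac13$ as an inequality $\rho_{P^{*}}<\rho_{Q}$ between continued fractions of explicit sub-chains of $T$. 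Thus $\mathcal Z(T)>\tfrac13$ is a finite conjunction of such inequalities.

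Next I would compute the moves explicitly: $\overline U$ adds $1$ to each entry of the characteristic sequence of a Type~I word (staying Type~I) and sends a Type~II word $(e_1,\dots,e_n)$ to the Type~I word with sequence $(2,1^{e_1-1},2,1^{e_2-1},\dots,1^{e_n-1},2)$, injectively -- which is the invertibility assertion; the move $\overline V$ is the mirror image under interchanging the two letters, hence exchanging Types I and II and the substitutions $U$ and $V$. Since $\overline U$ is the substitution $U\colon a\mapsto ab,\,b\mapsto b$ preceded by prepending a $b$, one has $\overline U(RS)=\overline U(R)\,U(S)$; thus a section $(R,S)$ of $T$ produces the section $(\overline U(R),U(S))$ of $\overline U(T)$, and conversely every section of $\overline U(T)$ either is of this form or sits next to the prepended $b$, in which case $\rho_{R^{*}}$ or $\rho_S^{-1}$ is $<2$ and so $\mathcal Z(R\vert S)>\tfrac13$ automatically. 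The crux is then to prove, for each active section $(R,S)$ of $T$, that $\mathcal Z(R\vert S)>\tfrac13\iff\mathcal Z(\overline U(R)\vert U(S))>\tfrac13$: I would deduce this from the monotonicity statement $\rho_P<\rho_Q\iff\rho_{\overline U(P)}<\rho_{\overline U(Q)}$ (valid for the chains in play, and proved by induction on word length from the fact that $U$ acts in an order-preserving way on the continued-fraction value), after checking that $\overline U$ sends active sections of $T$ to active sections (or automatically-admissible ones) of $\overline U(T)$ and is suitably compatible with reversal. Taking minima over all sections then gives $\mathcal Z(\overline U(T))>\tfrac13\iff\mathcal Z(T)>\tfrac13$, and the same with $V$ in place of $U$ gives the statement for $\overline V$.

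The main obstacle I anticipate is precisely this last step -- the monotonicity statement, and the bookkeeping of active sections under the length-changing substitutions, i.e.\ the Type~II$\to$Type~I case of $\overline U$ and the Type~I$\to$Type~II case of $\overline V$, where $\overline U$ is no longer simply ``add $1$ to every exponent'' and one must verify carefully that no new active section is created beyond those that are automatically $>\tfrac13$, and that none of the old ones is lost. The Type~I$\to$Type~I case should, by contrast, be essentially mechanical once the monotonicity statement is in hand.
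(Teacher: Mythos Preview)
Your approach is essentially correct and rests on the same reduction as the paper's, but the paper packages the argument more efficiently and thereby dissolves the ``main obstacle'' you anticipate. Rather than transporting sections and $\rho$-values through $\overline U$ and $\overline V$ and invoking a monotonicity statement $\rho_P<\rho_Q\iff\rho_{\overline U(P)}<\rho_{\overline U(Q)}$, the paper observes once and for all (via Lemma~\ref{R>S}) that admissibility of a Type~II word $T=ba^{e_1}b\cdots a^{e_n}b$ is \emph{equivalent} to the purely combinatorial family of lexicographic inequalities
\[
(e_i-1,e_{i+1},\dots,e_n,\infty)<(e_{i-1},\dots,e_1,\infty)\qquad(i=2,\dots,n),
\]
together with the same for $T^{*}$, and the analogous statement (with tail $1,1,\dots$) for Type~I. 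Everything then lives at the level of the characteristic sequence. The move that keeps the type ($\overline V$ on Type~II, $\overline U$ on Type~I) simply sends $(e_i)\mapsto(e_i+1)$, which manifestly preserves the inequalities. The type-changing move sends $(e_1,\dots,e_n)$ to $(2,1^{e_1-1},2,\dots,1^{e_n-1},2)$, and one checks directly that the Type~I lex inequalities for this new sequence are equivalent to the Type~II lex inequalities for the old one (the entries equal to $1$ give vacuous conditions; the entries equal to $2$ are in bijection with the old indices $i$, and the comparison at such an entry unwinds to the old inequality).

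What this buys over your route is exactly the bookkeeping you flag: there is no need to enumerate ``active'' sections of $\overline U(T)$, to worry about new sections created near the prepended $b$, or to prove an order-preservation statement for $\rho$ under the substitution; the characteristic sequence already encodes all the relevant sections, and the moves act on it by transparent combinatorial rules.
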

\begin{proof}
    $T$ is either of Type I or II. Suppose that $T$ is Type II, since the proof is identical in both cases. We can hence write $T=[ba^{e_1}...a^{e_n}b].$ By Lemma \ref{R>S} and standard monotonicity properties of continued fractions, $T$ is admissible if and only if $$(e_i-1,e_{i+1},...,e_n,\infty)<(e_{i-1},e_{i-2},...,e_1,\infty)$$ for $i=2,3,4,...,n$ in the lexicographic ordering and the same holds for $T^*.$ Notice that the move $\overline{V}$ simply increases all the $e_i$'s by $1$ and hence the equivalance is clear. As for the move $\overline{U},$ $\overline{U}(T)$ will be Type I and by Lemma \ref{R>S}, $\overline{U}(T)$ is admissible if and only if $$(e'_i-1,e'_{i+1},...,e'_k,1,1,1,...)<(e'_{i-1},e'_{i-2},...,e'_1,1,1,1,...),$$ where $\{e'_i\}_{i=1}^k$ is the characteristic sequence of $\overline{U}(T)$ and the same holds for $\overline{U}(T)^*.$ However, notice that the characteristic sequence of  $\overline{U}(T)$ is equal to $$(2,\underbrace{1,1,...,1}_{e_1-1},2,\underbrace{1,1,...,1}_{e_2-1},...,\underbrace{1,1,...,1}_{e_n-1},2),$$ from which the equivalence follows. Working similarly with $\overline{U}(T)^*$ concludes the proof of the lemma.
\end{proof}
The following theorem classifies all admissible chains.
\begin{thm}
Let $T$ some chain that does not consist only of $1$'s or only of $2$'s. Then, $T$ is admissible if and only if there exists a finite word $\overline{\Psi}$ in the $A-$Nielsen moves $\overline{U}$ and $\overline{V}$ and some $k\in\mathbb{N}$ such that $$T=\overline{\Psi}(b(ab)^k).$$
\end{thm}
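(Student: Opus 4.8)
The plan is to prove both directions of the equivalence by induction, using Proposition \ref{Nielsen moves} as the engine that preserves admissibility and the structural Lemmas \ref{1 or 2 is zero} and \ref{starting} as the tools that force a reduction step.

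For the backward direction, I would argue by induction on the length of the word $\overline{\Psi}$. The base case is that $b(ab)^k$ is admissible: its characteristic sequence is $(1,1,\dots,1)$ and one checks directly via Lemma \ref{R>S} (equivalently, via the lexicographic condition appearing in the proof of Proposition \ref{Nielsen moves}) that all the required inequalities are strict, or more simply one computes $\mathcal{Z}(b(ab)^k)$ directly and sees it exceeds $\tfrac13$. For the inductive step, if $T = \overline{\Psi}(b(ab)^k)$ with $\overline{\Psi} = \overline{W}\,\overline{\Psi}'$ where $\overline{W}\in\{\overline{U},\overline{V}\}$, then by induction $\overline{\Psi}'(b(ab)^k)$ is admissible, and Proposition \ref{Nielsen moves} immediately gives that $T$ is admissible.

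The forward direction is where the real work lies. Given an admissible $T$ not consisting only of $1$'s or only of $2$'s, by Lemma \ref{starting} and Lemma \ref{1 or 2 is zero} it is of Type I or Type II, so (up to reversal, using $\mathcal{Z}(T)=\mathcal{Z}(T^*)$) one of $\overline{U}^{-1}$ or $\overline{V}^{-1}$ applies — Type II words admit $\overline{U}^{-1}$ and Type I words admit $\overline{V}^{-1}$, as asserted in the discussion preceding Proposition \ref{Nielsen moves}. The key point I would establish carefully is that applying the appropriate inverse move strictly decreases some complexity measure — the natural candidate being the total word length, or equivalently $\sum e_i$ together with $n$. Concretely: if $T$ is Type II, say $T=[ba^{e_1}\cdots a^{e_n}b]$, then by Lemma \ref{1 or 2 is zero} applied to this form, writing things out, one sees $\overline{U}^{-1}$ replaces $T$ by a shorter word; if $T$ is Type I one uses $\overline{V}^{-1}$, which decreases all $e_i$ by $1$ and hence strictly shortens the word unless some $e_i=1$ already — and if some $e_i=1$ in a Type I word, one must first check that this does not obstruct $\overline{V}^{-1}$; here Lemma \ref{1 or 2 is zero} guarantees that in fact one family of exponents is bounded by $2$, which is exactly the information needed to see that the reduction terminates. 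Iterating, the word length strictly decreases at each step, so after finitely many steps we must reach a word that admits no further reduction of either type; I would then show such an irreducible admissible word must be $b(ab)^k$ for some $k$ (the only Type I/Type II words on which both structural obstructions to inverting vanish simultaneously are those with characteristic sequence all equal to $1$, which after normalizing the boundary $2$'s forces the form $b(ab)^k$). Reading the sequence of inverse moves backwards produces the desired $\overline{\Psi}$.

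The main obstacle I anticipate is the bookkeeping in the forward direction: one must verify that at each stage the inverse Nielsen move is actually applicable (i.e. that $T$ genuinely has the syntactic shape $\overline{U}(T')$ or $\overline{V}(T')$ with $T'$ still a legitimate admissible-type chain obeying the boundary convention about even clusters), and that the complexity strictly drops. The cleanest route is probably to phrase everything in terms of the characteristic sequence $(e_1,\dots,e_n)$: Type II $\leftrightarrow$ inverting $\overline{U}$ corresponds to the combinatorial operation of stripping the pattern $2\,1^{e_1-1}2\,1^{e_2-1}\cdots$ back to $(e_1,\dots,e_n)$, and Type I $\leftrightarrow$ inverting $\overline{V}$ corresponds to subtracting $1$ from every $e_i$; checking that Lemma \ref{1 or 2 is zero} makes at least one of these legal and length-reducing for any admissible $T\neq b(ab)^k$ is the crux, and it is where I'd expect to spend most of the argument.
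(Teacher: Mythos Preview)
Your proposal is correct and follows essentially the same route as the paper: the backward direction via Proposition~\ref{Nielsen moves} and the admissibility of $b(ab)^k$, and the forward direction by inducting on the word length $N(T)$, using Lemmas~\ref{1 or 2 is zero} and~\ref{starting} to classify $T$ as Type~I or Type~II and then applying the appropriate inverse $A$-Nielsen move to strictly shorten the word. Your identification of the irreducible words as exactly the $b(ab)^k$ is in fact slightly more careful than the paper's treatment of the base case, but the overall strategy is the same.
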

\begin{proof}
    The reverse dirertion is clear by Proposition \ref{Nielsen moves}  since $T=b(ab)^k$ is admissible, as one can deduce by Lemma \ref{R>S}. For the other direction, we induct on the length  $N(T)$ of the word $T$ in the letters $a$ and $b$. If $N(T)=3$, then we must have $T=bab.$ Notice now that $N(\overline{U}(T))>N(T)$ and  $N(\overline{V}(T))>N(T)$. Hence, according to whether $T$ is Type I or II, we can apply $\overline{U}^{-1}$ or  $\overline{V}^{-1}$ accordingly and decrease the length of the word. Say $T$ is of Type I. By induction, $\overline{V}^{-1}(T)=\overline{\Psi}((b(ab)^k))$ for some word $\overline{\Psi}$ on $\overline{U}$ and $\overline{V}.$ Defining $\overline{\Psi}'=\overline{U}\cdot\overline{\Psi}$ concludes the proof of the theorem.
\end{proof}
After classifying all possible admissible chains, we wish to understand their corresponding $\mathcal{Z}$ function, or equivalently their minimal splitting.
\begin{lemma} \label{w}
For a word of Nielsen moves $\Psi$, we have that $$\Psi([ab])=awb,$$ where $w$ is symmetric. 
\end{lemma}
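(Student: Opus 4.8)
The plan is to induct on the length of the word $\Psi$ in the generators $U$ and $V$. For the base case $\Psi = \mathrm{id}$ we have $\Psi([ab]) = [ab] = a\,w\,b$ with $w$ the empty word, which is trivially symmetric (equal to its own reverse). For the inductive step, suppose $\Psi([ab]) = awb$ with $w$ symmetric, and consider the two cases $U\Psi$ and $V\Psi$. Here I would apply $U$ or $V$ \emph{letterwise} to the word $awb$: since $U$ and $V$ are monoid endomorphisms of the free monoid on $\{a,b\}$ (they are restrictions of the free-group automorphisms), computing $U(awb)$ amounts to substituting $a \mapsto ab$, $b\mapsto b$ throughout. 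So $U(awb) = (ab)\,U(w)\,b = a\,(b\,U(w))\,b$, and I claim $b\,U(w)$ is symmetric; similarly $V(awb) = a\,V(w)\,(ab) = a\,(V(w)\,a)\,b$, and I claim $V(w)\,a$ is symmetric.

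The heart of the argument is therefore the following symmetry bookkeeping, which I would isolate as the key step. One checks that for the substitution $U$ (namely $a\mapsto ab$, $b\mapsto b$) and any word $v$, the reversal satisfies $U(v)^* = $ (the $U'$-image of $v^*$), where $U'$ is the "reversed" substitution $a \mapsto ba$, $b \mapsto b$; and note $b\cdot U'(v) = U(v)\cdot b$ as words (pushing the extra $b$ from the front of each $ba$-block to the back). Combining these: $(b\,U(w))^* = U(w)^* \, b = U'(w^*)\,b = U'(w)\,b = b\,U(w)$, where the middle equality uses $w = w^*$. This shows $b\,U(w)$ is symmetric. The case of $V$ is the mirror image, with $V': a\mapsto a, b\mapsto ba$ reversed to $b \mapsto ab$ wait—more carefully, $V$ is $b\mapsto ab$, its reversal-conjugate is $b \mapsto ba$, and the identity $V(v)\,a = a\,(\text{reversed }V)(v)$ lets one conclude $V(w)\,a$ is symmetric by the same manipulation. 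The inverse moves $U^{-1}, V^{-1}$ are handled identically since they are again monoid-like substitutions on admissible words (or one simply notes the statement for a word $\Psi$ follows from the statement for $\Psi^{-1}$ applied to $awb$).

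The main obstacle I anticipate is purely notational: making precise the claim that reversal intertwines $U$ with its "block-reversed" companion and that the boundary letter $b$ (resp.\ $a$) can be slid from one end to the other. Once that single lemma about substitutions and reversals is stated cleanly, the induction runs automatically and covers all four generators $U, V, U^{-1}, V^{-1}$. I would also remark that this lemma is exactly what makes the $\overline{U}, \overline{V}$ corrections in Proposition \ref{Nielsen moves} compatible with the palindromic structure that ultimately governs $\mathcal{Z}(T)$: the symmetric core $w$ is what one splits to compute the minimal section, so $\mathcal{Z}(T)$ will depend only on $w$ and its symmetry.
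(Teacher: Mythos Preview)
Your argument is essentially the paper's own proof: induct on the length of $\Psi$, write $U(awb)=a\,(bU(w))\,b$, and verify that $bU(w)$ is a palindrome by tracking how $U$ interacts with reversal. The paper compresses this into the one-line free-group identity $bU(w)=bU(w^*)=bb^{-1}(U(w))^*b=(bU(w))^*$, which is exactly your reversed-substitution device $U'$ in disguise.

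Two small corrections. First, the sliding identity you need is $U'(v)\,b = b\,U(v)$, not $b\,U'(v)=U(v)\,b$ as you stated; your final chain $(bU(w))^* = U(w)^*b = U'(w^*)b = U'(w)b = bU(w)$ actually uses the correct version, so only the displayed statement is backwards (test on $v=a$: $U'(a)\,b=bab=b\,U(a)$, whereas your stated form gives $bba\neq abb$). Second, in this paper $\Psi$ ranges over \emph{positive} words in $U,V$, so there is no need to discuss $U^{-1},V^{-1}$; in any case $U^{-1}$ is not a monoid substitution on $\{a,b\}^*$, so that parenthetical would not go through as written.
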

\begin{proof}
    Fix $k.$ We induct on the length of $\Psi.$ If $N(\Psi)=0,$ the result clearly holds. Assume, it holds for $\Psi.$ Then $U(\Psi([ab]))=U(awb)=abU(w)b$. However, $$bU(w)=bU(w^*)=bb^{-1}(U(w))^*b=(bU(w))^*,$$ and hence $bU(w)$ is symmetric. With similar cosiderations for $V\Psi$, we conclude the result.
\end{proof}

The following proposition will essentially give us our minimal splitting for $\overline{\Psi}(b(ab)^k)$.

\begin{prop}
    Let $\Psi$ a word on the Nielsen moves $U$,$V$ and let $\overline{\Psi}$ denote the corresponding word on the $A-$Nielsen moves. Then, there is a unique section $\Pi_1\Pi_2$ of $\Psi(ab)$ such that $\Pi_1,\Pi_2$  are symmetric. We then have $$\overline{\Psi}(b(ab)^k)=[\Pi_2\underbrace{\Pi_1\Pi_2...\Pi_1\Pi_2}_{k}]$$ 
\end{prop}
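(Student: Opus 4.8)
The plan is to prove the existence and uniqueness of the symmetric section together with the displayed identity, simultaneously, by induction on the length $N(\Psi)$ of $\Psi$ as a word in $U,V$, deleting the leftmost generator at each step. For $N(\Psi)=0$ we have $\Psi(ab)=ab$: a nonempty symmetric prefix of $ab$ starts, hence ends, with $a$, and a nonempty symmetric suffix ends, hence starts, with $b$, so the only symmetric section is $(\Pi_1,\Pi_2)=(a,b)$, and indeed $\overline{\Psi}(b(ab)^k)=b(ab)^k=[\Pi_2(\Pi_1\Pi_2)^k]$.

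The inductive step rests on two conjugation identities for reversal, each proved by a one-line induction on word length (the one-letter cases being $(U(a))^*=(ab)^*=ba=b\,U(a)\,b^{-1}$, $(U(b))^*=b=b\,U(b)\,b^{-1}$, and similarly for $V$): for every $z\in F_2$,
\[
(U(z))^*=b\,U(z^*)\,b^{-1},\qquad (V(z))^*=a^{-1}\,V(z^*)\,a .
\]
In particular, if $z$ is a nonempty symmetric chain then $U(z)b^{-1}$ and $bU(z)$ are symmetric chains (here $U(z)$ ends in $b$), and $V(z)a$ and $a^{-1}V(z)$ are symmetric chains (here $V(z)$ begins with $a$). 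I will also use that $U,V$ are injective and that in any word $U(W)$ every occurrence of $a$ is the initial letter of a block $U(a)=ab$, hence is immediately followed by $b$; dually, in $V(W)$ every $b$ is immediately preceded by $a$.

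Now suppose $\Psi=U\Psi'$, let $(\Pi_1,\Pi_2)$ be the symmetric section of $\Psi'(ab)$ furnished by the inductive hypothesis, and put $\Pi_1'=U(\Pi_1)b^{-1}$, $\Pi_2'=bU(\Pi_2)$. Since $U$ is a homomorphism, $\Psi(ab)=U(\Pi_1)U(\Pi_2)=\Pi_1'\Pi_2'$ with both $\Pi_i'$ symmetric, so a symmetric section exists. For uniqueness: Lemma \ref{w} gives $\Psi'(ab)=awb$, which starts with $a$ and ends with $b$, hence is not symmetric; so in every symmetric section of $\Psi'(ab)$ both parts are nonempty, the first starting and ending with $a$ and the second starting and ending with $b$. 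Thus each symmetric section of $\Psi'(ab)$ cuts immediately after one of its $t\ge 1$ occurrences of $a$, i.e.\ $\Psi'(ab)=A_jaB_j$ for some $j\in\{1,\dots,t\}$ with $A_ja$ and $B_j$ symmetric. The same reasoning applies to $\Psi(ab)=U(\Psi'(ab))$ — which also starts with $a$ and ends with $b$, and in which every $a$ is the initial letter of a block $U(a)=ab$ — so its symmetric sections cut inside such a block, giving $\Psi(ab)=(U(A_j)a)(bU(B_j))$ for some $j\in\{1,\dots,t\}$ with $U(A_j)a$ and $bU(B_j)$ symmetric. By the reversal identity, injectivity of $U$, and a multiplication by $b$ and regrouping,
\[
U(A_j)a\ \text{symmetric}\iff U(A_ja)=U(aA_j^*)\iff A_ja=aA_j^*\iff A_ja\ \text{symmetric},
\]
and likewise $bU(B_j)$ is symmetric iff $B_j$ is. Hence the symmetric sections of $\Psi(ab)$ and of $\Psi'(ab)$ correspond bijectively via $j$, so uniqueness for $\Psi'$ forces uniqueness for $\Psi$, the section being $(\Pi_1',\Pi_2')$. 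Finally, using $\overline{U}(T)=bU(T)$ and the inductive hypothesis,
\[
\overline{\Psi}(b(ab)^k)=b\,U(\Pi_2(\Pi_1\Pi_2)^k)=bU(\Pi_2)\,(U(\Pi_1)U(\Pi_2))^k=\Pi_2'\,(\Pi_1'\Pi_2')^k ,
\]
which is the asserted identity. The case $\Psi=V\Psi'$ is entirely symmetric: one cuts $\Psi'(ab)$ immediately before its occurrences of $b$, uses that every $b$ in $V(\Psi'(ab))$ is preceded by an $a$, finds the unique symmetric section of $\Psi(ab)$ to be $(V(\Pi_1)a,\ a^{-1}V(\Pi_2))$, and from $\overline{V}(T)=a^{-1}V(T)$ obtains $\overline{\Psi}(b(ab)^k)=a^{-1}V(\Pi_2)\,(V(\Pi_1)V(\Pi_2))^k$ of the required shape.

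The substantive point is the uniqueness — that passing through $U$ or $V$ neither destroys nor creates symmetric sections — and the one step that demands genuine care is locating the cut in $U(\Psi'(ab))$, which forces one to combine ``a symmetric prefix of $awb$ ends in $a$'' with ``every $a$ in $U(W)$ is the initial letter of a block $U(a)=ab$''. Granting the two reversal identities, everything else — in particular the product formula for $\overline{\Psi}(b(ab)^k)$ — is a formal consequence of the homomorphism property of $U,V$ together with the definitions $\overline{U}(T)=bU(T)$ and $\overline{V}(T)=a^{-1}V(T)$.
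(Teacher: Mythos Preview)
Your proof is correct and follows essentially the same inductive scheme as the paper's: peel off the leftmost generator, use the inductive symmetric section $(\Pi_1,\Pi_2)$ of $\Psi'(ab)$, and set $(\Pi_1',\Pi_2')=(U(\Pi_1)b^{-1},\,bU(\Pi_2))$ (respectively $(V(\Pi_1)a,\,a^{-1}V(\Pi_2))$). Your version is in fact more complete than the paper's, since you make explicit the reversal identities $(U(z))^*=bU(z^*)b^{-1}$, $(V(z))^*=a^{-1}V(z^*)a$ and the observation that every $a$ in $U(W)$ sits at the start of a block $ab$, both of which the paper tacitly uses when it writes $U^{-1}(\Pi''_1 b)$, $U^{-1}(b^{-1}\Pi''_2)$ and invokes the inductive uniqueness.
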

\begin{proof}
    Fix $k$. We use induction on the length of the word $\Psi$, or equivalently the length of the word $\overline{\Psi}.$ If $N(\Psi)=0$, then $\Pi_1=a$ and $\Pi_2=b$. Now suppose the theorem holds for all $\Psi$ with $N(\Psi)<t.$ Let $\Psi$ have length $t$. Suppose the first letter of $\Psi$ is $U$ without loss of generality. Then $U^{-1}\Psi$ has length $t-1,$ and therefore we can uniquely write $U^{-1}\Psi(ab)=\Pi_1\Pi_2$ and $\overline{U^{-1}}\overline{\Psi}(b(ab)^k)=[\Pi_2\underbrace{\Pi_1\Pi_2...\Pi_1\Pi_2}_{k}]$. Define $\Pi'_1=U(\Pi_1)b^{-1}$ and $\Pi'_2=bU(\Pi_2)$ and notice that the section $\Pi'_1\Pi'_2$ satisfies the wanted properties. As for uniqueness, assume that we can write $\Psi(ab)=\Pi''_1\Pi''_2,$ where \begin{enumerate}
        \item $\Pi''_1$ is symmetric and starts and ends with $a$
        \item $\Pi''_2$ is symmetric and starts and ends with $b$.
    \end{enumerate}  Then $$U^{-1}\Psi(ab)=U^{-1}(\Pi''_1b)U^{-1}(b^{-1}\Pi''_2).$$  By induction, $$U^{-1}(\Pi''_1b)=U^{-1}(\Pi'_1b)\text{ and }U^{-1}(b^{-1}\Pi''_2)=U^{-1}(b^{-1}\Pi'_2),$$ from which uniqueness follows.
\end{proof}

We shall refer to this unique section of $\Psi(ab)$ as the symmetric section. We have proved that every admissible word has the form $[\Pi_2\underbrace{\Pi_1\Pi_2...\Pi_1\Pi_2}_{k}],$ where $\Pi_1\Pi_2$ is the symmetric section of $\Psi(ab)$. The next lemma will be our main ingredient for finding the minimal section of $\overline{\Psi}(b(ab)^k)$.

\begin{lemma} \label{ordering}
    Let $\Psi$ some finite Nielsen word. Write $$\Psi(ab)=a^{e_1}ba^{e_2}...a^{e_n}b,$$ if $\alpha$ is of Type I and $$\Psi(ab)=ab^{e_1}ab^{e_2}...ab^{e_n},$$ if $\alpha$ is of Type II. \\ \\ We then have $$(e_1,e_2,e_3,...,e_n,\infty)>(e_i,e_{i+1},...,e_{n},\infty)$$ and $$(e_n,e_{n-1},e_{n-2},...,e_1,\infty)<(e_{n-i+1},e_{n-i},...,e_{1},\infty),$$ for all $i\in\{2,...,n\}$, if $\alpha$ is of Type I and $$(e_1,e_2,e_3,...,e_n,1,1,...)<(e_i,e_{i+1},...,e_{n},1,1,..),$$ and $$(e_n,e_{n-1},e_{n-2},...,e_1,1,1,..)>(e_{n-i+1},e_{n-i},...,e_{1},1,1,..),$$ for all $i\in\{2,...,n\}$, if $\alpha$ is of Type II. The ordering here is lexicographic.
    
\end{lemma}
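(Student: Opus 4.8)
The plan is to prove Lemma \ref{ordering} by induction on $N(\Psi)$, the length of the Nielsen word $\Psi$, running the induction in parallel with the structural description of $\Psi(ab)$ already obtained (the symmetric section $\Pi_1\Pi_2$). The base case $N(\Psi)=0$ gives $\Psi(ab)=ab$, so $n=1$ and the inequalities are vacuous (the index $i$ ranges over an empty set). For the inductive step I would write $\Psi = X\Psi'$ where $X\in\{U,V\}$ and $N(\Psi')=N(\Psi)-1$, and track how applying $X$ transforms the characteristic sequence $(e_1,\dots,e_n)$ of $\Psi'(ab)$ into that of $\Psi(ab)$. The key observation is that $U$ and $V$ each have a very transparent effect on the run-length encoding of a word in $a,b$: for instance $U:a\mapsto ab,\ b\mapsto b$ sends a block $a^{e}$ to $a(ba)^{e-1}b$... so applying $U$ to a Type I word $a^{e_1}b\cdots a^{e_n}b$ replaces it by a Type II word whose characteristic sequence in the letter $b$ is obtained from $(e_1,\dots,e_n)$ by inserting $1$'s between non-trivial blocks, and dually for $V$. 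Thus the two sets of inequalities in the lemma are genuinely swapped by each Nielsen move (Type I $\leftrightarrow$ Type II), which is why the statement is phrased symmetrically in the two types, and the induction is set up to carry \emph{both} families of inequalities simultaneously.

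The second, and more conceptual, ingredient is to reinterpret the lexicographic inequalities of the lemma as the admissibility conditions of Lemma \ref{R>S}, Proposition \ref{Nielsen moves}, and the running analysis in Section \ref{Early considerations of admissible configurations}. Concretely: the inequality $(e_1,\dots,e_n,\infty) > (e_i,\dots,e_n,\infty)$ says that the continued fraction built from the prefix of the characteristic sequence dominates every one of its proper suffixes, which is exactly the condition that the word $\Psi(ab)$, read as a chain, has no ``bad'' section forcing $\mathcal{Z}\le \tfrac13$ — the same condition that Proposition \ref{Nielsen moves} shows is preserved by $\overline{U}$ and $\overline{V}$. In fact the symmetric pair of inequalities (prefix beats suffixes, and the reversed word's prefix is beaten by its suffixes) is precisely the two-sided statement ``$T$ and $T^*$ are both admissible''. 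So an alternative, cleaner route I would actually pursue is: observe that $\Psi(ab)$ differs from the admissible word $\overline{\Psi}(b(ab)^k)$ only by bounded modifications at the two ends and by the periodic repetition of the symmetric block $\Pi_1\Pi_2$; since $\overline{\Psi}(b(ab)^k)$ is admissible by the classification theorem, Lemma \ref{R>S} applied to all its sections yields exactly the claimed lexicographic comparisons for the characteristic sequence, after one checks that the finitely many ``end effects'' and the choice of $k$ do not interfere (taking $k$ large, or passing to the natural infinite periodic word $\Psi^\infty$, removes the end effects entirely).

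Here is the order of steps I would carry out. (1) Reduce to the infinite periodic word: show that the characteristic sequence of $\Psi(ab)$ coincides, in its relevant initial and terminal segments, with that of $\overline{\Psi}(b(ab)^k)$ for $k$ large, so that the inequalities for $\Psi(ab)$ follow from those for the admissible chain. (2) Translate Lemma \ref{R>S} and the convention (``starts and ends with an even number of $2$'s'', i.e. with the block $b$) into the statement that for every section $(R^*|S)$ of an admissible chain one has $[S]<[R]$; spell out what this says at the level of the $a^{e}$/$b^{e}$ run-lengths, obtaining inequalities of continued-fraction type. (3) Convert the continued-fraction inequalities into the lexicographic inequalities on $(e_1,\dots,e_n)$ using the standard monotonicity of $x\mapsto [0;x]$ and the fact that the tails are either $\infty$ (Type I, where an $a$-block ending the comparison corresponds to $+\infty$) or $1,1,1,\dots$ (Type II). (4) Verify the boundary/parity bookkeeping — that the extremal indices $i=1$ and $i=n$ behave as asserted and that the direction of the inequality is correct in each of the four displayed lines. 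The main obstacle I anticipate is precisely step (4): keeping the four sign conventions straight (prefix vs. suffix, Type I vs. Type II, and the asymmetry introduced because $U$ and $V$ act differently on the two ends of a word), and making sure the ``$\infty$'' versus ``$1,1,\dots$'' tail is attached to the correct side of each comparison; the induction via Nielsen moves is the safest way to force this bookkeeping to be internally consistent, since each move swaps the two types and one can check the transformation of each inequality line by line.
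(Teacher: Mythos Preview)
Your Plan 1 (induction on $N(\Psi)$, tracking how $U$ and $V$ transform the characteristic sequence) is exactly what the paper does: for $V$ one simply has $e_i\mapsto e_i+1$, and for $U$ the sequence $(e_1,\dots,e_n)$ becomes $(\underbrace{1,\dots,1}_{e_1-1},2,\underbrace{1,\dots,1}_{e_2-1},2,\dots,\underbrace{1,\dots,1}_{e_n-1},2)$ with the type swapped, from which the four inequality families are carried over line by line. So that approach is correct and matches the paper.

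Your preferred Plan 2, however, has a genuine gap. You assert that the inequality $(e_1,\dots,e_n,\infty)>(e_i,\dots,e_n,\infty)$ ``is exactly the condition that the word $\Psi(ab)$ has no bad section,'' i.e.\ the admissibility condition coming from Lemma \ref{R>S}. It is not. Lemma \ref{R>S} (and its translation in the proof of Proposition \ref{Nielsen moves}) compares the \emph{suffix} $(e_i-1,e_{i+1},\dots)$ to the \emph{reversed prefix} $(e_{i-1},e_{i-2},\dots,e_1)$, whereas the lemma at hand compares the suffix $(e_i,e_{i+1},\dots)$ to the \emph{full forward sequence} $(e_1,e_2,\dots)$. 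These are different comparisons; they agree only under a palindrome hypothesis that $\Psi(ab)=awb$ does not satisfy. Passing to the periodic word $\overline{\Psi}(b(ab)^k)$ with $k$ large does not fix this: the right tail of a middle section is then eventually periodic with period $(e_1,\dots,e_n)$ and the reversed left tail is eventually periodic with period $(e_n,\dots,e_1)$, so you would be comparing two different periodic tails rather than two sequences both terminated by $\infty$ (or by $1,1,\dots$). Extracting the clean ``suffix $<$ whole word'' statements of the lemma from these periodic comparisons would require exploiting the palindromic structure of $\Pi_1$ and $\Pi_2$ in a non-trivial way that your outline does not supply. Stick with the direct induction; it is both the paper's route and the shorter one.
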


\begin{proof}
The assertion is clear for the empty Nielsen word. Assume it is true for all Nielsen words up to length $N$. Then for some word of length $N$, say of Type I, of the form $a^{e_1}ba^{e_2}...a^{e_n}b$, we have that $$V(a^{e_1}ba^{e_2}...a^{e_n}b)=a^{e_1+1}ba^{e_2+1}...a^{e_n+1}b,$$ and the conclusion follows. On the other hand, the word $U(a^{e_1}ba^{e_2}...a^{e_n}b)$ is now of Type II with sequence of exponents equal to $$(\underbrace{1,1,...,1}_{e_1-1},2,\underbrace{1,1,...,1}_{e_2-1},...,\underbrace{1,1,...,1}_{e_n-1},2).$$ 
\end{proof}

\begin{thm}\label{Thm unique symmetric}
   Let $\Psi$ some Nielsen word in $a=(2,2)$ and $b=(1,1)$. For some positive integer $k$, let $r^{\Psi}_k$ the rational number with corresponding chain $[\Pi_2\underbrace{\Pi_1\Pi_2...\Pi_1\Pi_2}_{k}],$ where $\Pi=\Pi_1\Pi_2$ is the unique symmetric section of $\Psi(ab).$ Then 
   $$\mathcal{Z}(r^{\Psi}_k)=\left([\Pi]+\frac{1}{[\Pi_2\underbrace{\Pi\Pi\cdot\cdot\cdot\Pi}_{k-1\text{ times}}]}\right)^{-1}.$$

    \end{thm}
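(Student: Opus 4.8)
Throughout, $\Pi^{j}$ denotes the concatenation of $j$ copies of $\Pi$. Recall that $\mathcal{Z}(r^{\Psi}_k)$ is the minimum of $\mathcal{Z}(R\mid S)=\bigl(\rho_{R^{*}}+\tfrac1{\rho_{S}}\bigr)^{-1}$ over all sections $(R,S)$ of the chain $T=[\Pi_2\Pi^{k}]$, which by the preceding proposition equals $[\Pi_2\underbrace{\Pi_1\Pi_2\cdots\Pi_1\Pi_2}_{k}]$. The plan is to exhibit one section realizing the asserted value and to show it is minimal. The candidate is the cut placed just before the last copy of $\Pi$:
\[
T=\Pi_2\Pi^{k-1}\ \mid\ \Pi,\qquad R=\Pi_2\Pi^{k-1},\quad S=\Pi .
\]
Since $\Pi_1$ begins and ends with $a$ while $\Pi_2$ begins and ends with $b$ — the defining property of the symmetric section — this cut is a ``$\dots b\mid a\dots$'' transition, the natural place for an extremum. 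The two palindromicity facts forced by the symmetric decomposition are that $\Pi_2\Pi^{k-1}=\Pi_2(\Pi_1\Pi_2)^{k-1}=(\Pi_2\Pi_1)^{k-1}\Pi_2$ is a palindrome (because $\Pi_1,\Pi_2$ are) and, similarly, that $T=(\Pi_2\Pi_1)^{k}\Pi_2=T^{*}$. Hence $\rho_{R^{*}}=\rho_{R}=\rho_{\Pi_2\Pi^{k-1}}=1/[\Pi_2\Pi^{k-1}]$, so
\[
\mathcal{Z}(R\mid S)=\Bigl(\tfrac1{[\Pi_2\Pi^{k-1}]}+\tfrac1{\rho_{\Pi}}\Bigr)^{-1}=\Bigl([\Pi]+\tfrac1{[\Pi_2\Pi^{k-1}]}\Bigr)^{-1},
\]
which is the claimed quantity. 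It remains to prove that no section of $T$ gives a larger value of $\rho_{R^{*}}+\tfrac1{\rho_{S}}$.

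For the minimality I would use that $T$ is a palindrome, so that $R^{*}$ is itself a suffix of $T$ for every section $(R,S)$; the problem becomes that of maximizing $\rho_{U}+1/\rho_{V}$ over complementary suffixes $U,V$ of $T$ and of locating the maximum at $U=\Pi_2\Pi^{k-1}$, $V=\Pi$. Crude estimates (using $\rho_{R^{*}}<1$, $1/\rho_{S}<3$, Lemmas \ref{121}, \ref{212}, \ref{R>S}, and the normalization that the clusters of $1$'s and of $2$'s are even) restrict the optimum to cuts at a $b\to a$ transition of $T$ or to cuts interior to one of the palindromic blocks $\Pi_1,\Pi_2$. The key input for the finer analysis is Lemma \ref{ordering}: the block--exponent word of $\Pi=\Psi(ab)$ is lexicographically maximal among its forward tails and minimal among its backward tails. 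Fed into the alternating monotonicity of continued fractions — and using that each letter of a chain carries two digits, so the truncations $[\Pi^{j}]$ are obtained by raising an even-index partial quotient and are therefore strictly decreasing in $j$ — this shows that, among the suffixes of $T$ beginning at a copy of $\Pi_1$, the suffix $\Pi$ maximizes $[s_1;s_2,\dots]$, while the reversed prefix at such a transition is governed by the palindrome $\Pi_2\Pi^{k-1}$; combining the forward and backward extremality forces the optimal $b\to a$ transition to be the one bordering the last $\Pi$, namely our candidate. The ``mirror'' transition $T=\Pi_2\Pi_1\mid\Pi_2\Pi^{k-1}$ gives instead $\rho_{\Pi}+1/\rho_{\Pi_2\Pi^{k-1}}$, which is strictly smaller since $0<\rho_{\Pi}<\rho_{\Pi_2\Pi^{k-1}}$ (the former continued fraction begins with the digit $2$, the latter with $1$) and $v+\tfrac1u>u+\tfrac1v$ whenever $0<u<v$; and one checks via Lemma \ref{ordering} that cuts interior to a block never exceed the candidate value, equality being possible e.g.\ at the centre of the first $\Pi_1$, which by the palindromicity of $T$ and of $\Pi_1$ yields exactly the same $\rho_{R^{*}}+1/\rho_{S}$.

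The main obstacle is precisely this finer analysis: Lemma \ref{ordering} is a statement about a single copy of $\Pi$, and upgrading it to control the finite word $T=\Pi_2\Pi^{k}$ — handling the two boundary copies and all cuts lying strictly inside the palindromic blocks $\Pi_1,\Pi_2$, and certifying that none of them beats $[\Pi]+1/[\Pi_2\Pi^{k-1}]$ — is where the work sits, the bookkeeping with the nested continued fractions and with parities being delicate. An alternative I would also consider, mirroring the proof of the preceding proposition, is to induct on $N(\Psi)$, tracking how the minimal section transforms under the M\"obius actions attached to $U$ and $V$; there the base case is the family $T=[b(ab)^{k}]$, for which one checks directly the identity $[\Pi]+1/[\Pi_2\Pi^{k-1}]=[ab]+1/[b(ab)^{k-1}]$.
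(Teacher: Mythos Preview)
Your approach is correct and matches the paper's: invoke Lemma~\ref{ordering} to force the minimal section to lie at a $\Pi$-block boundary, $Q=\Pi_2\Pi^{s}\mid P=\Pi^{k-s}$, and then use that $\Pi$ (hence $P$ and $Q$) has even length in the digits $1,2$ so that standard continued-fraction monotonicity pins down $s=k-1$. The paper's own proof is three sentences and does not carry out the detailed case analysis of interior cuts that you anticipate; your ``main obstacle'' is exactly what the paper dispatches by the single appeal to Lemma~\ref{ordering}, so you are being more cautious than necessary.
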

\begin{proof}
     Let $Q\vert P$ denote a section of $r^{\Psi}_k$ such that $\mathcal{Z}(r^{\Psi}_k)=\mathcal{Z}(Q\vert P).$ By Lemma \ref{ordering}, we know that $Q=\Pi_2\underbrace{\Pi\Pi\cdot\cdot\cdot\Pi}_{s\text{ times}}$ and  $P=\underbrace{\Pi\Pi\cdot\cdot\cdot\Pi}_{k-s\text{ times}}.$ However, since $\Pi$ has even length in $1$ and $2$ we get by standard continued fraction theory that since $P$ and $Q$ are even, $\rho_P+\frac{1}{\rho_Q}$ is maximized for $s=k-1$ and therefore $$\mathcal{Z}(r^{\Psi}_k)=\left([\Pi]+\frac{1}{[\Pi_2\underbrace{\Pi\Pi\cdot\cdot\cdot\Pi}_{k-1\text{ times}}]}\right)^{-1}.$$
\end{proof}

By putting together the points constructed in Theorem \ref{Thm unique symmetric}, the chains that consist only of 1's and the chains that consist only of 2's, we have  found all the points inside $\mathcal{Z}\cap (\frac{1}{3},\infty):$

\begin{corollary}\label{main cor}
   $\mathcal{Z}\bigcap (\frac{1}{3},\infty]$ is equal to $$\left\{\mathcal{Z}(r^{\Psi}_k),\text{$\Psi$ is a Nielsen word and $k\in\mathbb{N}$}\right\}\cup\left\{\frac{F_{n-2}}{F_n},n\geq 2\right\}\cup\left\{\left(\frac{1}{2}+[2;\underbrace{2,2,...,2}_{n}]\right)^{-1},n\geq 0\right\}.$$ 
\end{corollary}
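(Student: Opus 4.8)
The plan is to assemble the corollary directly from the classification results already proved, treating separately the three families that appear on the right-hand side. First I would handle the degenerate chains excluded from the Nielsen-move machinery: by the remarks following Lemma \ref{only even} and Lemma \ref{starting}, a chain consisting only of $1$'s corresponds (after the standard folding identities for continued fractions with all partial quotients equal to $1$) to a ratio of consecutive Fibonacci numbers, and Perron's formula from Section \ref{Equivalent definitions} evaluates $\mathcal{Z}$ on $[1^n]$ to $F_{n-2}/F_n$; symmetrically, a chain of all $2$'s gives $\mathcal{Z}([2^n])=\bigl(\tfrac12+[2;\underbrace{2,\dots,2}_{n}]\bigr)^{-1}$, since the maximal splitting of an all-$2$'s chain puts $[2;2,\dots,2]$ on one side and the tail $[0;2,2,\dots]$ on the other. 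These two computations account for the second and third sets in the union.

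Next, for a chain $T$ that uses both letters and satisfies $\mathcal{Z}(T)>\tfrac13$, the classification theorem (the \textbf{Theorem} preceding Lemma \ref{w}) gives $T=\overline{\Psi}(b(ab)^k)$ for some Nielsen word $\Psi$ and some $k\in\mathbb{N}$, and by the Proposition following Lemma \ref{w} this equals the chain $[\Pi_2\underbrace{\Pi_1\Pi_2\cdots\Pi_1\Pi_2}_{k}]$ built from the symmetric section $\Pi=\Pi_1\Pi_2$ of $\Psi(ab)$; that is, $T$ is exactly $r^{\Psi}_k$ in the notation of Theorem \ref{Thm unique symmetric}. Hence every admissible chain using both letters is of the form $r^{\Psi}_k$, so $\mathcal{Z}(T)\in\{\mathcal{Z}(r^\Psi_k)\}$. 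Conversely, Proposition \ref{Nielsen moves} shows that $\overline{U},\overline{V}$ preserve admissibility, so each $r^\Psi_k$ is itself admissible, i.e. $\mathcal{Z}(r^\Psi_k)>\tfrac13$; thus the first set is genuinely contained in $\mathcal{Z}\cap(\tfrac13,\infty)$. Combining the two inclusions with the all-$1$'s and all-$2$'s cases yields the claimed equality, where the single endpoint $\infty$ (attained by the empty chain, $\mathcal{Z}=\infty$, which one may regard as the $k=0$ or $n=0$ boundary case of the Fibonacci family) accounts for the half-open bracket in $(\tfrac13,\infty]$.

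The only real content beyond bookkeeping is checking that nothing is double-counted or omitted at the boundary between the families: one should verify that the base chains $b(ab)^k$ and their images do not secretly coincide with all-$1$'s or all-$2$'s chains, which is immediate since they contain both $a$ and $b$, and that the $n=0,1$ edge cases of the Fibonacci and all-$2$'s formulas are interpreted consistently (e.g. $[1^2]=b$ corresponds to $a/b$ with $\mathcal{Z}=\infty$, matching $F_0/F_2$). I expect the main—though still routine—obstacle to be pinning down the two explicit evaluations $\mathcal{Z}([1^n])=F_{n-2}/F_n$ and $\mathcal{Z}([2^n])=\bigl(\tfrac12+[2;2,\dots,2]\bigr)^{-1}$ cleanly from Perron's theorem, since one must identify which index $i$ maximizes $[a_i;\dots,a_n]+[0;a_{i-1},\dots,a_1]$ and justify that the maximum is attained at the claimed symmetric-looking split; everything else follows by citing the structural theorems already established.
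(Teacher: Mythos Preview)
Your proposal is correct and follows exactly the paper's approach: the paper does not give a separate proof of this corollary but simply states it as the result of ``putting together the points constructed in Theorem \ref{Thm unique symmetric}, the chains that consist only of 1's and the chains that consist only of 2's,'' which is precisely the three-family decomposition you outline. Your plan merely fleshes out the bookkeeping (the Perron evaluations for $[1^n]$ and $[2^n]$, the two inclusions via the classification theorem and Proposition \ref{Nielsen moves}) that the paper leaves implicit.
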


\section{Limit points of $\mathcal{Z}\cap (\frac{1}{3},\infty)$ and limit points of $\mathcal{Z}\cap (\frac{1}{3}-\epsilon,\frac{1}{3})$.}\label{Limit points}

Notice that if $k_i$ is some sequence of positive integers and $\Psi_i$ is a sequence of Nielsen words on $U$ and $V$ satisfying $N(\Psi_i)\rightarrow\infty$, then by Lemma \ref{section 1/3} $$\mathcal{Z}(r_{k_i}^{\Psi_i})\rightarrow\frac{1}{3}.$$ Hence, in order to study the limit points of $\mathcal{Z}$ larger than $\frac{1}{3}$, we have to understand for a fixed $\Psi$ the limits $\lim_{n\rightarrow\infty}\mathcal{Z}(r_n^{\Psi}).$  \\ \\ 
To a continued fraction of a rational number $\rho$ of the form $[0;a_1,a_2,...,a_n]$ with $n$ even, we associate the matrix $$M_{\rho}=\begin{pmatrix}
a_1 & 1\\
1 & 0 
\end{pmatrix}\cdot\begin{pmatrix}
a_2 & 1\\
1 & 0 
\end{pmatrix}\cdot\cdot\cdot\begin{pmatrix}
a_n & 1\\
1 & 0 
\end{pmatrix}.$$
Markoff showed that the triple$$(\frac{Tr(\Psi(a))}{3},\frac{Tr(\Psi(b))}{3},\frac{Tr(\Psi(ab))}{3})$$ is a solution $(x,y,z)$ of the Markoff equation, with no two entries sharing a common prime factor. He further showed, that $M_{\Psi(ab)}$ has the following form: 
$$M_{\Psi(ab)}=\begin{pmatrix}
3z-z' & *\\
z & z' 
\end{pmatrix},$$ where $z'$ is the unique integer solution of the equation $$x+yz'\equiv 0\mod z\text{ and }0<z'<z.$$

\begin{proof}[Proof of Theorem \ref{Limit points}]
By Theorem \ref{Thm unique symmetric} and the argument in the beginning of the section, all the limit points of $\mathcal{Z}$ larger than $\frac{1}{3}$ are of the form: 
\begin{enumerate}
    \item $\frac{1}{(\frac{1+\sqrt{5}}{2})^2},$
    \item $\frac{2}{(1+\sqrt{2})^2},$
    \item $\frac{1}{[\Pi_2\overline{\Pi}]^{-1}+[\Pi]},$ where here $\overline{\Pi}$ denotes the periodization of $\Pi$ and $\Pi=\Pi_1\Pi_2$ denotes the symmetric splitting of $\Psi(ab)$ for some Nielsen word $\Psi.$ 
\end{enumerate}

We know by definition of the matrix $M_{\Psi(ab)}$ that $[\Pi]=\frac{3z-z'}{z}$ and that $[\Pi_2\overline{\Pi}]^{-1}$ is equal to  $$\frac{(3z-z')-z'-\sqrt{9z^2-4}}{2z}.$$ Therefore, $$\frac{1}{[\Pi_2\overline{\Pi}]^{-1}+[\Pi]}=\frac{1}{-\frac{(3z-2z')-\sqrt{9z^2-4}}{2z}+\frac{3z-z'}{z}}=\frac{1}{\frac{\sqrt{9z^2-4}}{2z}+\frac{3}{2}}.$$
\end{proof}

We now prove Theorem \ref{Uncountably}, showing that below the point $\frac{1}{3}$, we can find uncountably many limit points as close as we desire:

\begin{proof}[Proof of Theorem \ref{Uncountably}]:
    For any finite sequence of positive integers $n_1,n_2,...,n_k$ define $$R(n_1,n_2,...,n_k)=[0;\underbrace{1,1,...,1}_{n_1},2,2,\underbrace{1,1,...,1}_{n_2},2,2,...,2,2,\underbrace{1,1,...,1}_{n_k}]$$
    Clearly by Lemma \ref{section 1/3}, there exists some $N_0$, such that for any $n_1,n_2,...,n_k\geq N_0:$ $$\mathcal{Z}\left(R(n_1,n_2,...,n_k)\right)>\frac{1}{3}-\epsilon.$$ Choose now $n_1$ to be odd. By the description of all the points in the spectrum above $\frac{1}{3},$ we know that $\mathcal{Z}\left(R(n_1,n_2,...,n_k)\right)\leq\frac{1}{3}$ and by picking $n_1< n_i$ for all $i\in\{2,3,...,k\}$ we can make the inequality strict. We conclude that $$\mathcal{Z}\left(R(n_1,n_2,...,n_k)\right)\in(\frac{1}{3}-\epsilon,\frac{1}{3}).$$

    Choose $n_2$ even such that $n_i<n_2$ for all $i\in\{1,3,...,k\}$. Then, we can see the maximal section is  $$[\underbrace{1,1,...,1}_{n_1}\vert2,2,\underbrace{1,1,...,1}_{n_2},2,2,...,2,2,\underbrace{1,1,...,1}_{n_k}],$$ and $$\mathcal{Z}\left(R(n_1,n_2,...,n_k)\right)=\frac{1}{[2;2,\underbrace{1,1,...,1}_{n_2},2,2,...,2,2,\underbrace{1,1,...,1}_{n_k}]+[0;\underbrace{1,1,...,1}_{n_1}]}.$$ It is clear now that by choosing $(n_3,n_4,...,n_k)$  in the range $(n_1,n_2)$ and taking $k\rightarrow\infty$, we can construct uncountably many limit points of the form $$\lim_{k\rightarrow\infty}\frac{1}{[2;2,\underbrace{1,1,...,1}_{n_2},2,2,...,2,2,\underbrace{1,1,...,1}_{n_k},...]+[0;\underbrace{1,1,...,1}_{n_1}]}.$$
\end{proof}

\section*{Acknowledgements}
I wish to thank my advisor Peter Sarnak for suggesting this problem to me and for many useful remarks on earlier drafts of this paper.

\bibliographystyle{abbrv}
\bibliography{biblio.bib}

\end{document}